\newtheorem{theorem}{Theorem}[section]
\newtheorem{corollary}[theorem]{Corollary}
\newtheorem{lemma}[theorem]{Lemma}
\newtheorem{proposition}[theorem]{Proposition}
\theoremstyle{definition}
\newtheorem{definition}[theorem]{Definition}
\theoremstyle{remark}
\numberwithin{equation}{section}
\begin{document}
\title{A combinatorial approach to  rational exponential groups}
\author{\sc M. Shahryari}
\thanks{{\scriptsize
\hskip -0.4 true cm MSC(2010): 20E05 and 20E10
\newline Keywords: exponential groups; free rational exponential groups; rational complex; colored rational complex; rational bouquet; fundamental group; Schreier variety}}

\address{M. Shahryari\\
 Department of Pure Mathematics,  Faculty of Mathematical
Sciences, University of Tabriz, Tabriz, Iran }

\email{mshahryari@tabrizu.ac.ir}
\date{\today}

\begin{abstract}
We give a suitable definition of the concept of rational complex and prove that every rational exponential group is the fundamental group of  such a complex. In this framework, we prove that the variety of rational exponential groups is a Schreier variety.
\end{abstract}

\maketitle

\section{Introduction}
A group $G$ is said to be a {\em rational exponential group}, if every element of $G$ has a unique $n$-th root for every integer $n$.
In this case, we can define a right action of the field $\mathbb{Q}$ on $G$ by $g^{\frac{m}{n}}=(g^{\frac{1}{n}})^m$. More generally,
for a given ring $A$ with identity, a group $G$ is called an {\em $A$-group}, or an {\em exponential group over $A$}, if there is  a
right action of $A$ on $G$ (denoted by $(g, \alpha)\mapsto g^{\alpha}$), such that the following axioms are satisfied:\\

1- $g^1=g$, $g^0=1$, and $1^{\alpha}=1$.

2- $g^{\alpha+\beta}=g^{\alpha}g^{\beta}$.

3- $g^{\alpha \beta}=(g^{\alpha})^{\beta}$.

4- $(hgh^{-1})^{\alpha}=hg^{\alpha}h^{-1}$.

5- if $[g, h]=1$ then $(gh)^{\alpha}=g^{\alpha}h^{\alpha}$.\\

The theory of exponential groups begins with the works of A. Mal'cev \cite{Mal}, P. Hall \cite{Hall}, G. Baumslag \cite{Baum},
R. Lyndon \cite{Lyndon}, A. Myasnikov and V. Remeslennikov \cite{MR1} and \cite{MR2}. The axioms 1-4 are introduced by Lyndon
and in this case we call $G$ an $A$-group in the sense of Lyndon. The axiom 5 is added to the definition by Myasnikov and Remeslennikov.
The class of $A$-groups in the sense of Lyndon is a variety in the algebraic language
$$
\mathcal{L}_A=(\cdot, ^{-1}, 1, f_{\alpha}:\ \alpha\in A).
$$
Here, $f_{\alpha}$ is a unitary functional symbol corresponding to
$\alpha\in A$. The class of $A$-groups (in the sense of Myasnikov
and Remeslennikov) is a quasi-variety in the language
$\mathcal{L}_A$. For any set $X$, the free exponential group on $X$
in the variety of Lyndon exponential groups will be denoted by
$F_A(X)$. It is also a member of the quasi-variety of  $A$-groups in
the restricted sense of Myasnikov-Remeslennikov. Note that, any
rational exponential group satisfies axioms 1-5, and any
$\mathbb{Q}$-group in the sense of Lyndon is a rational exponential
group, so the class of rational exponential groups is a variety.
During this article, by a normal $\mathbb{Q}$-subgroup of a rational
exponential group $G$, we mean a normal subgroup $K$ which is also a $\mathbb{Q}$-subgroup and
$$
\forall a, b\in G\ \forall m: a^mK=b^mK \Rightarrow aK=bK.
$$
In this case $G/H$ is also a rational exponential group. Note that
any rational exponential group $G$ can be expressed as
$G=F_{\mathbb{Q}}(X)/K$, where $X$ is a set and $K$ is a normal
$\mathbb{Q}$-subgroup of $F_{\mathbb{Q}}(X)$.

The theory of exponential groups is motivated by the following circumstances. First of all, there are many groups which are
 exponential: groups with unique roots of elements are $\mathbb{Q}$-groups (rational exponential groups), any group of exponent
 $m$ is a $\mathbb{Z}_m$-group. Unipotent groups over a field $K$ of characteristic zero are $K$-groups. Pro-$p$-groups are exponential
 over the ring of $p$-adic integers. Secondly, the notion of an $A$-group, generalizes the definition of $A$-modules, and so there are many
 problems of module theory, which can be investigated for exponential groups. Third, using this notion, we can define and study the
 {\em centroid} of a group $G$, which is by definition, the largest ring over which $G$ is exponential, \cite{Lio-Myas}. Finally,
 some problems of model theory concerning the universal theory of  ordinary free group, are related to the theory of exponential group.
 For example, it was conjectured (see \cite{Rem}) that any group, universally equivalent to a free group, is a subgroup of a
 free $\mathbb{Z}[x]$-group and this conjecture is proved by Kharlampovich and Myasnikov,  \cite{Olga3}, \cite{Olga4}.
 There are also many connections between exponential groups, HNN-extensions, amalgamated free products, CSA groups and torsion free
 hyperbolic groups (\cite{Olga1},  \cite{Olga2} and \cite{MR2}).

From now on, we focus on rational exponential groups. For the basic notions, the reader may use \cite{MR1} and \cite{MR2}.
Here, we give the definition of the {\em tensor $\mathbb{Q}$-completion} of an arbitrary group $E$. Let $E$ be a group and $G$ an exponential group.
Suppose there is a homomorphism $\lambda: E\to G$, with the following properties:\\

1- $G$ is $\mathbb{Q}$-generated by the image of $\lambda$,

2- for any rational group $H$ and any homomorphism $f:E\to H$, there exists a unique $\mathbb{Q}$-homomorphism $g:G\to H$,
such that $g\circ \lambda=f$.\\

Then we call the group $G$, the tensor completion of $E$. It is proved that the tensor completion exists and it is unique up
to $\mathbb{Q}$-isomorphism. So, we can use the notation $G=E^{\mathbb{Q}}$. It can be shown that $F_{\mathbb{Q}}(X)=F(X)^{\mathbb{Q}}$,
(where $F(X)$ is the ordinary free group on $X$) and  $F(X)$ is a subgroup of $F_{\mathbb{Q}}(X)$.

Note that the structure of $F_{\mathbb{Q}}(X)$ depends only on the cardinality of $X$. Although this is true for arbitrary rings, we give
here an elementary proof for the field $\mathbb{Q}$. Clearly, this argument can be used for any field of characteristic zero.

\begin{proposition}
We have $F_\mathbb{Q}(X)\cong F_\mathbb{Q}(Y)$ if and only if $|X|=|Y|$.
\end{proposition}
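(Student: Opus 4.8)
The plan is to prove the nontrivial implication by abelianizing inside the variety of rational exponential groups, thereby reducing the question to the dimension invariance of $\mathbb{Q}$-vector spaces. The forward direction is immediate: a bijection $X\to Y$ extends, by the universal property of the free $\mathbb{Q}$-group, to mutually inverse $\mathbb{Q}$-homomorphisms $F_\mathbb{Q}(X)\rightleftarrows F_\mathbb{Q}(Y)$, so $|X|=|Y|$ gives $F_\mathbb{Q}(X)\cong F_\mathbb{Q}(Y)$. The whole content is in the converse, where I would produce a cardinality invariant that survives passage to $F_\mathbb{Q}$.

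First I would record that an \emph{abelian} rational exponential group is exactly a $\mathbb{Q}$-vector space. Writing the group additively, axioms 1--3 say that $\alpha\mapsto(g\mapsto g^{\alpha})$ is a unital, multiplicatively compatible scalar action, and since the group is abelian, axiom 5 applies to every pair and yields $(g+h)^{\alpha}=g^{\alpha}+h^{\alpha}$; together these are precisely the vector-space axioms, and the converse is clear. Consequently the free abelian rational exponential group on a set $X$ is the free $\mathbb{Q}$-vector space $\mathbb{Q}^{(X)}=\bigoplus_{x\in X}\mathbb{Q}$. Next I would set up the abelianization functor $G\mapsto G^{\mathrm{ab}}=G/N$, where $N$ is the smallest normal $\mathbb{Q}$-subgroup (in the sense recalled in the introduction) containing all commutators $[g,h]$. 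Here one must check that such a smallest $N$ exists, i.e.\ that an intersection of normal $\mathbb{Q}$-subgroups satisfying the root-uniqueness condition $a^{m}K=b^{m}K\Rightarrow aK=bK$ again satisfies it; this holds because $ab^{-1}\in K_1$ and $ab^{-1}\in K_2$ force $ab^{-1}\in K_1\cap K_2$. Then $G^{\mathrm{ab}}$ is an abelian rational exponential group, hence a $\mathbb{Q}$-vector space.

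The key step is the identification $F_\mathbb{Q}(X)^{\mathrm{ab}}\cong\mathbb{Q}^{(X)}$, which I would obtain from a Yoneda argument: abelianization is left adjoint to the inclusion of abelian $\mathbb{Q}$-groups, so for every $\mathbb{Q}$-vector space $V$,
\[
\mathrm{Hom}_{\mathbb{Q}\text{-vect}}\!\big(F_\mathbb{Q}(X)^{\mathrm{ab}},V\big)\cong\mathrm{Hom}_{\mathbb{Q}\text{-grp}}\!\big(F_\mathbb{Q}(X),V\big)\cong\mathrm{Map}(X,V)\cong\mathrm{Hom}_{\mathbb{Q}\text{-vect}}\!\big(\mathbb{Q}^{(X)},V\big),
\]
where the first isomorphism is the adjunction, the second is freeness of $F_\mathbb{Q}(X)$, and the third is freeness of the vector space $\mathbb{Q}^{(X)}$. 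These identifications are natural in $V$, so by the Yoneda lemma $F_\mathbb{Q}(X)^{\mathrm{ab}}\cong\mathbb{Q}^{(X)}$ as $\mathbb{Q}$-vector spaces.

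Finally, any $\mathbb{Q}$-isomorphism $F_\mathbb{Q}(X)\cong F_\mathbb{Q}(Y)$, being functorial, induces an isomorphism of abelianizations $\mathbb{Q}^{(X)}\cong\mathbb{Q}^{(Y)}$. Since $\dim_\mathbb{Q}\mathbb{Q}^{(X)}=|X|$ and the dimension of a vector space (the cardinality of any basis) is an isomorphism invariant, including in the infinite-dimensional case, we conclude $|X|=|Y|$. I expect the main obstacle to be the clean establishment of the abelianization functor in this variety: one should not simply import it from ordinary group theory, but verify that the commutator closure can be taken inside the class of normal $\mathbb{Q}$-subgroups so that the quotient both lands in the variety and carries the stated universal property; once that is in hand, the dimension-invariance conclusion is routine linear algebra.
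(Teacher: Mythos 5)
Your proof is correct, and it takes a genuinely different route from the paper's. The paper also linearizes the problem, but contravariantly and with no abelianization machinery: it forms the dual space $V_X=\mathrm{Hom}_{\mathbb{Q}}(F_{\mathbb{Q}}(X),\mathbb{Q})$ of $\mathbb{Q}$-homomorphisms into the additive group of rationals, with pointwise operations, observes that a $\mathbb{Q}$-isomorphism $\varphi\colon F_{\mathbb{Q}}(X)\to F_{\mathbb{Q}}(Y)$ induces a linear isomorphism $\varphi^{\ast}\colon V_Y\to V_X$, and finishes by asserting $\dim_{\mathbb{Q}}V_X=|X|$. That route is shorter: it uses only freeness and needs none of what you must set up (intersection-closure of the paper's class of normal $\mathbb{Q}$-subgroups, the abelianization adjunction, Yoneda). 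What your route buys is the infinite case. By freeness, $V_X\cong\mathbb{Q}^{X}$, the full direct product, whose dimension equals $|X|$ only when $X$ is finite; for infinite $X$ the Erd\H{o}s--Kaplansky theorem gives $\dim_{\mathbb{Q}}\mathbb{Q}^{X}=2^{|X|}$, and since $2^{|X|}=2^{|Y|}$ with $|X|\neq|Y|$ is consistent with ZFC, the paper's invariant cannot distinguish all infinite ranks as written. Your covariant invariant $F_{\mathbb{Q}}(X)^{\mathrm{ab}}\cong\mathbb{Q}^{(X)}$ is the direct sum, of dimension exactly $|X|$ in every case, so your argument proves the proposition uniformly for all cardinalities. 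One point to make explicit in a final write-up: in the adjunction step, the reason $N\subseteq\ker f$ for every $\mathbb{Q}$-homomorphism $f$ from $G$ to an abelian rational group is that $\ker f$ contains all commutators and is a normal $\mathbb{Q}$-subgroup in the paper's strong sense (if $f(a)^m=f(b)^m$ then $f(a)=f(b)$ by uniqueness of roots in the target), and $N$ is minimal among such subgroups; with that noted, the rest of your argument is routine.
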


\begin{proof}
The "if" part is obvious. To prove the "only if" part, note that the  additive group of rational numbers is a rational exponential group,
so we may define
$$
V_X=Hom_{\mathbb{Q}}(F_{\mathbb{Q}}(X), \mathbb{Q}).
$$
Note that for $f\in V_X$, $u\in F_{\mathbb{Q}}(X)$ and $q\in \mathbb{Q}$, we have $f(u^q)=qf(u)$. We define a $\mathbb{Q}$-space structure
of $V_X$ by $(f+g)(u)=f(u)+g(u)$ and $(qf)(u)=qf(u)$. One can verify that $f+g$ and $qf$ are again elements of $V_X$ and further $V_X$ is a
vector space over $\mathbb{Q}$ with the dimension $|X|$. Now, if $\varphi: F_{\mathbb{Q}}(X)\to F_{\mathbb{Q}}(Y)$ is a $\mathbb{Q}$-isomorphism,
then $\varphi^{\ast}:V_Y\to V_X$ defined by $\varphi^{\ast}(f)=f\circ \varphi$, is an isomorphism of $\mathbb{Q}$-spaces $V_X$ and $V_Y$. So,
we  proved the assertion.
\end{proof}

A variety $\mathfrak{X}$ of algebras in an algebraic language $\mathcal{L}$ is called a {\em Schreier Variety}, if any subalgebra of any free
algebra in $\mathfrak{X}$ is again free. Many varieties are proved to be Schreier: \\

1- the variety of groups (Nielsen-Schreier, 1924, 1927),

2- the variety of non-associative algebras (Kurosh, 1947),

3- the variety of Lie algebras (Shirshov-Witt, 1953),

4- the variety of Lie $p$-algebras (Shirshov, 1953),

5- the varieties of commutative and anti-commutative algebras (Shirshov, 1954),

6- the variety of Lie superalgebras (Mikhalev-Shtern, 1985, 1986).

7- the variety of rational exponential groups (Polin, 1972).\\

For a complete discussion and citations, see \cite{Lewin}. In this paper we we give a new proof for the fact that  the variety of rational exponential groups is  a Schreier variety. We do this, using a combinatorial machinery which completely different from the approach of Polin \cite{Polin}, who uses free products in varieties of exponential groups:  we give a suitable definition of the concept of {\em rational complex}
and prove that every rational exponential group is the fundamental group of  such a complex. We use our results to  prove that the variety of
rational exponential groups is a Schreier variety. Our method is strong enough to investigate other properties of rational exponential groups, such as the subgroup theorem of Korush and ranks of $\mathbb{Q}$-subgroups of the free rational exponential group. We work over the field of rational numbers, but it can be seen that our method can be  also applied for the rings containing a copy of $\mathbb{Q}$. 

This paper consists of  the following sections: in the next section, we give  basic definitions and properties of rational complexes.
We show that the fundamental group of  a rational complex is free rational exponential group. In section 3, we introduce the notion of a
{\em colored rational complex} and in the section 4, we prove that every rational exponential group is the fundamental group of such a colored
complex. In Section 5, the notion of  the {\em covering complex} is introduced. Using this notion,  we will prove our main result: the
variety of rational exponential groups is a Schreier variety.


\section{Rational complexes}

Throughout this article, $\mathbb{Q}^{+}$ is the multiplicative group of positive rational numbers.
Any 1-dimensional complex in this article is supposed to be connected. Suppose $C$ is a 1-dimensional complex.
For any vertex $v$, the set of all cycles with the terminal point $v$, will be denoted by $C(v)$.
Suppose $\mathbb{Q}^{+}$ acts on $C(v)$. For any $p\in C(v)$ and $\alpha\in \mathbb{Q}^{+}$, we use the notation $p^{(\alpha)}$
for the result of action of $\alpha$ on $p$.

\begin{definition}
A cancelation  operation on  $C(v)$ consists of \\

1- deleting every part of the form $ee^{-1}$ or $e^{-1}e$, where $e$ is an edge.

2- replacing a part of the form $p^{(\alpha)}p^{(\beta)}$ by $p^{(\alpha+\beta)}$.

\end{definition}

By the cancelation  of type 2, one can delete any part of the form
$$
p^{(\alpha)}p^{(\beta)}(p^{(\alpha+\beta)})^{-1}
$$
in the elements of $C(v)$. We say that $p_1, p_2\in C(v)$ are homotopic, if $p_1$ can be transformed to $p_2$ by means of finitely many
cancelations. In this case we use the notation $p_1\simeq p_2$. The homotopy class of $p$ is denoted by $[p]$. We say that a cycle $p\in C(v)$ is
{\em reduced} if
$$
l(p)=\min_{q\in [p]} l(q),
$$
where $l(p)$ denotes the length of $p$ (the number of edges appearing in $p$).  It is easy to see that for any natural number $m$, we
have $p^{(m)}\simeq p^m$. Note that, we don't know how many reduced elements are there in a given homotopy class, however, we will
assume in the sequel that this element is unique for a special kind of classes (classes of atomic cycles). A cycle $p$ will be called
{\em cyclically reduced}, if it is reduced and it is not  in the form $uqu^{-1}$, where $u$ and $q$ are cycles.

\begin{definition}
Let $C$ be a 1-dimensional complex with  an action of $\mathbb{Q}^{+}$ on every $C(v)$. Suppose further that\\

1- for any vertex $v$, any $p_1, p_2\in C(v)$ and any natural number $m$
$$
p_1^m\simeq p_2^m \Rightarrow p_1\simeq p_2,
$$

2- for any $p\in C(v)$ and $\alpha\in \mathbb{Q}^{+}$
$$
(p^{-1})^{(\alpha)}\simeq (p^{(\alpha)})^{-1},
$$

3- for any two vertices $v$ and $v^{\prime}$, any path $u$ from $v$ to $v^{\prime}$, any $p\in C(v^{\prime})$ and $\alpha \in \mathbb{Q}^{+}$
$$
(upu^{-1})^{(\alpha)}\simeq up^{(\alpha)}u^{-1}.
$$

Then we say that  $C$ has a {\em compatible action} of $\mathbb{Q}^{+}$.
\end{definition}

\begin{definition}
Let $C$ be a 1-dimensional complex with a compatible action of $\mathbb{Q}^{+}$ and suppose for any vertex $v\in C$, there is a {\em height function}
$h_v: C(v)\to \mathbb{Z}$ with the following properties:\\

1- Its image is an infinite  set of  non-negative  integers, $h_v(p)=0$ if and only if $p\simeq 1$, where $1$ is the trivial cycle,
and $p\simeq q$ implies $h_v(p)=h_v(q)$.

2- If $m>1$, and if there is no $q$ such that $p\simeq q^m$ and $h_v(q)=h_v(p)$, then
$$
h_v(p^{(\frac{1}{m})})=h_v(p)+1.
$$

3- We have the inequality
$$
h_v(p_1p_2\ldots p_k)\leq \max_{i} h_v(p_i),
$$
and equality holds, if $p_1p_2\ldots p_k$ is  reduced.

4- If $h_v(p)$ is not equal to the minimum of $h_v$, then there are cycles $q_1, \ldots, q_k$ and integers $m_1, \ldots, m_k$, such that
$$
p\simeq q_1^{(\frac{1}{m_1})}q_2^{(\frac{1}{m_2})}\ldots q_k^{(\frac{1}{m_k})},
$$
and $h_v(q_i)< h_v(p)$, for any $i$. Further, if $k$ is minimum, then the expression is unique up to homotopy; this means that if we have also
$$
p\simeq u_1^{(\frac{1}{r_1})}u_2^{(\frac{1}{r_2})}\ldots u_k^{(\frac{1}{r_k})}
$$
with $h_v(u_i)<h_v(p)$, then for any $i$, $q_i\simeq u_i$ and $m_i=r_i$.

Before going to the next items, we introduce the following notion: a cycle $p\in  C(v)$ is {\em atomic} if its height is equal
the minimum positive value of $h_v$.

5- If $p$ is atomic and cyclically reduced, then $p^m$ is also reduced for any integer $m$.

6- If $p$ and $q$ are atomic and reduced, then $p\simeq q$ implies $p=q$.

Then we call $C$ a {\em rational complex}.
\end{definition}

Let $C$ be a rational complex and $v$ be any vertex of $C$. Let
$$
\pi_{\mathbb{Q}}(C, v)=\{ [p]:\ p\in C(v)\}.
$$
Define a binary operation on this set by $[p][q]=[pq]$. It is easy to see that $\pi_{\mathbb{Q}}(C, v)$ is a rational exponential group,
which we call it the {\em fundamental group} of $C$ based at $v$. Note that since $C$ is connected, so for any two different vertices $v$
and $v^{\prime}$, we have
$$
\pi_{\mathbb{Q}}(C, v)\cong  \pi_{\mathbb{Q}}(C, v^{\prime}),
$$
and further this is an isomorphism of rational groups. The main aim of this section is to prove that this fundamental group is free in the
variety of rational exponential groups. In what follows, a  rational complex with a unique vertex is called a {\em rational bouquet}.

\begin{definition}
Let $C$ be a rational complex and $T\subseteq C$ be a maximal tree. We define a rational bouquet $C_T$ with the following properties:\\

1- It has a unique vertex $v$ (which is at the same time a fixed vertex of $C$).

2- For any edge $e\in C\setminus T$, there is a unique edge $\hat{e}\in C_T$.

3- If $\widehat{e_1}=\widehat{e_2}$, then $e_1=e_2$.

4- We have $\widehat{e^{-1}}=\hat{e}^{-1}$.

To define the action of $\mathbb{Q}^+$ on $C_T(v)$, we need a notation. First we put $\hat{e}=v$ for $e\in T$. Second, if $p=e_1e_2\ldots e_n$,
then we define $p_T=\widehat{e_1}\widehat{e_2}\ldots\widehat{e_n}$. Now we can continue the definition.\\

5- The action of $\mathbb{Q}^+$ is given by $(p_T)^{(\alpha)}=(p^{(\alpha)})_T$.

6- The height function is $h(p_T)=h_v(p)$.

We call $C_T$ the {\em retract} of $C$ with respect to $T$.

\end{definition}

Clearly, we should check the requirements of a rational complex for $C_T$. This can be done, if we prove the following lemma.
We also need the lemma, if we want to show that the action of $\mathbb{Q}^+$ and the height function of $C_T$ are well-defined.

\begin{lemma}
Let $p$ and $q$ be elements of $C(v)$. We have\\

1- $p_T=q_T$ if and only if $p$ and $q$ are the same except in some possible  parts of the form $e_i\ldots e_je_j^{-1}\ldots e_i^{-1}$,
with $e_i, \ldots e_j\in T$.

2- $p_T\simeq q_T$ if and only if $p\simeq q$.
\end{lemma}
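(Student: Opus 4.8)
We have a rational complex $C$ with a maximal tree $T$. The retract $C_T$ is a bouquet with one vertex; the map $p \mapsto p_T$ collapses tree edges (setting $\hat{e}=v$ for $e\in T$) and replaces each non-tree edge $e$ by its unique copy $\hat{e}$. The lemma characterizes when two cycles $p,q\in C(v)$ have the same image $p_T=q_T$ (part 1) and establishes that the collapsing map respects homotopy in both directions (part 2). Let me think about how to prove each part.

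Let me start with part 1, which is purely combinatorial (no homotopy yet). The map sends $e_1\cdots e_n$ to $\widehat{e_1}\cdots\widehat{e_n}$, where tree edges become the empty symbol $v$ (identity). So $p_T$ is the word obtained from $p$ by deleting all tree edges and keeping the non-tree edges in order. Then $p_T = q_T$ iff $p$ and $q$ have the same subsequence of non-tree edges. I need to characterize when two cycles in $C(v)$ have the same non-tree-edge subsequence in terms of the stated "parts of the form $e_i\cdots e_j e_j^{-1}\cdots e_i^{-1}$ with all edges in $T$."
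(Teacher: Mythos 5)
Your proposal stops exactly where the proof has to begin. What you have written --- that $p_T$ is the word of (hatted) non-tree edges of $p$ taken in order, so that $p_T=q_T$ is equivalent to $p$ and $q$ having the same ordered subsequence of non-tree edges --- is a correct but essentially definitional observation: it uses only that $e\mapsto\hat{e}$ is injective on $C\setminus T$ and kills $T$. The actual content of part 1 is the characterization of what the intervening tree-edge segments can look like, and that is precisely where the hypothesis that $T$ is a tree must enter; your text announces this step (``I need to characterize\dots'') and then ends. The missing argument runs as follows: write $p=e_1\ldots e_n$ and $q=f_1\ldots f_m$ with matched non-tree edges $e_{i_t}=f_{j_t}$; the segments of $p$ and of $q$ preceding the first matched edge (and, inductively, between consecutive matched edges) are paths in $T$ with the same endpoints, so concatenating one with the inverse of the other yields a cycle lying entirely in $T$; since a tree contains no nontrivial reduced cycle, the two segments can differ only by returning excursions of the stated form $e_i\ldots e_je_j^{-1}\ldots e_i^{-1}$. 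This is exactly the case analysis in the paper's proof (the cases $i_1,j_1>1$; $i_1>1,\ j_1=1$; $i_1=j_1=1$).

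Part 2 is entirely absent from your proposal, and it is not a formal consequence of part 1, because homotopy involves cancellations, not equality of words. The forward direction ($p\simeq q\Rightarrow p_T\simeq q_T$) is routine: a first-type cancellation $ee^{-1}$ in $p$ either disappears (if $e\in T$) or induces $\hat{e}\hat{e}^{-1}$ in $p_T$, and a second-type cancellation transfers via $(p^{(\alpha)})_T=(p_T)^{(\alpha)}$. The converse is the delicate half and again needs the tree property: if $\hat{e}\hat{e}^{-1}$ is a part of $p_T$, then in $p$ the edge $e$ is followed by $e^{-1}$, possibly separated by tree edges $e_1\ldots e_j$; but endpoint considerations force $e_1\ldots e_j$ to be a cycle in $T$, which is impossible, so $ee^{-1}$ is literally a part of $p$ --- and similarly a part $(q_T)^{(\alpha)}(q_T)^{(\beta)}$ of $p_T$ lifts to a part $q^{(\alpha)}q^{(\beta)}$ of $p$. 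Without this lifting argument the ``only if'' direction of part 2 --- which is what Theorem 2.6 actually relies on --- remains unproved.
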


\begin{proof}
Suppose $p_T=q_T$ and
$$
p=e_1e_2\ldots e_n\ \ e_{i_1}, \ldots, e_{i_r}\in C\setminus T
$$
$$
q=f_1f_2\ldots f_m\ \ f_{j_1}, \ldots, f_{j_s}\in C\setminus T,
$$
so, we have $\widehat{e_{i_1}}\ldots \widehat{e_{i_r}}=\widehat{f_{j_1}}\ldots\widehat{f_{j_s}}$. Hence  $r=s$ and for any $t$, $e_{i_t}=f_{j_t}$.
There are three possibilities:

Case i: Let $i_1, j_1>1$. Then $e_1\ldots e_{i_1-1}\subseteq T$ and $f_1\ldots f_{j_1-1}\subseteq T$. So, $e_1\ldots e_{i_1-1}f_{j_1-1}^{-1}\ldots f_1^{-1}$ is a cycle in $T$, and this is impossible, since $T$ is a tree. Hence we must have $e_1=f_1$, \ldots, $e_{i_1-1}=f_{j_1-1}$.

Case ii: Let $i_1>1$ and $j_1=1$. Then $e_1\ldots e_{i_1-1}\subseteq T$ and it is a cycle.

Case iii: Let $i_1=j_1=1$. In this case we have again three possibilities for $i_2$ and $j_2$, and so the same argument begins.

Therefore $p$ and $q$ are the same except in some possible  parts of the form $e_i\ldots e_je_j^{-1}\ldots e_i^{-1}$, with $e_i, \ldots e_j\in T$.
This proves the assertion 1. To prove the second part, we show that $p\simeq 1$ iff $p_T\simeq 1$. Clearly, if $ee^{-1}$ appears in $p$ and $e\in T$,
then automatically $\hat{e}\hat{e}^{-1}=v$. If $e\in C\setminus T$, then $\hat{e}\hat{e}^{-1}$ is a part of $p_T$. So, any cancelation of the
first type in $p$, causes a similar cancelation in $p_T$. Suppose
$$
q^{(\alpha)}q^{(\beta)}=(e_i\ldots e_j)(e_{j+1}\ldots e_k)
$$
is a part of $p$. Then clearly
\begin{eqnarray*}
(q_T)^{(\alpha)}(q_T)^{(\beta)}&=&(q^{(\alpha)})_T(q^{(\beta)})_T\\
                               &=&(\widehat{e_i}\ldots \widehat{e_j})(\widehat{e_{j+1}}\ldots \widehat{e_k})
\end{eqnarray*}
is a part of $p_T$ and so any cancelation of the second type in $p$ implies a similar cancelation for $p_T$. Therefore,
we showed that $p\simeq 1$ implies $p_T\simeq 1$. We prove the converse. Let $\hat{e}\hat{e}^{-1}$ be  a part of $p_T$.
So, $e$ does not lie in $T$ and $p$ has a part of the form $ee^{-1}$ or a part of the form $ee_1\ldots e_je^{-1}$, where $e_1, \ldots, e_j\in T$.
In the second case we have
$$
end(e)=in(e^{-1})=end(e_j)\ \ and \ \ end(e)=in(e_1),
$$
so $e_1\ldots e_j$ is a cycle in $T$, which is a contradiction. Hence $ee^{-1}$ is a part of $p$. If $(q_T)^{(\alpha)}(q_T)^{(\beta)}$
appears in $p_T$, then $q^{(\alpha)}q^{(\beta)}$ will be a part of $p$ or a part of the form $q^{(\alpha)}e_1\ldots e_jq^{(\beta)}$ will
appear in $p$. In the second case, $e_1\ldots e_j$ is again a cycle in $T$. This argument shows that if $p_T\simeq 1$, then $p\simeq 1$.

\end{proof}

\begin{theorem}
$C_T$ is a rational bouquet and we have
$$
\pi_\mathbb{Q}(C, v)\cong \pi_\mathbb{Q}(C_T, v).
$$
\end{theorem}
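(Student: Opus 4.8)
The plan is to prove the two assertions together by exhibiting a single map and showing that it transports all the relevant structure. Write $\Phi([p]) = [p_T]$ for $p \in C(v)$. The preceding Lemma does most of the work: part 2 shows at once that $\Phi$ is well defined and injective on homotopy classes, and part 1 (the fact that tree backtracks are homotopically trivial) is what makes the height $h(p_T) = h_v(p)$ and the action $(p_T)^{(\alpha)} = (p^{(\alpha)})_T$ on $C_T$ well defined in the first place. I would first record the elementary identities $(pq)_T = p_T q_T$ and $(p^{-1})_T = (p_T)^{-1}$ (the latter from item 4 of the definition of $C_T$), together with item 5, so that $\Phi$ respects products, inverses and the $\mathbb{Q}^+$-action and, by item 6, preserves heights.

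For surjectivity I would use the maximal tree. For each $e \in C\setminus T$ with endpoints $a = in(e)$, $b = end(e)$, let $g, g'$ be the tree geodesics $v\to a$ and $b\to v$ and set $p_e = g\,e\,g'$. Then $(p_e)_T = \hat e$, so every edge-loop $[\hat e]$ lies in the image of $\Phi$. Since the image is closed under product, inverse and the $\mathbb{Q}^+$-action, and since every cycle of $C_T(v)$ is built from the loops $\hat e$ by concatenation and the action, the image is all of $\pi_\mathbb{Q}(C_T, v)$. This yields the bijection $[p] \leftrightarrow [p_T]$ preserving products, action and height.

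With this correspondence in hand, the verification that $C_T$ is a rational complex splits cleanly. Every axiom phrased purely in terms of homotopy, concatenation, the $\mathbb{Q}^+$-action and the height transports directly from $C$: the three compatible-action axioms, the height conditions of items 1, 2 and 4 of the definition of a rational complex, and the inequality $h(w_1\cdots w_k)\le\max_i h(w_i)$. In each case one replaces the $w_i$ by lifts $p_i$ with $(p_i)_T = w_i$, invokes the corresponding property of $C$, and pushes the conclusion back through the bijection; atomicity transports because $h$ and its set of values are preserved, so the minimal positive value is the same in both complexes. I would then conclude that $\pi_\mathbb{Q}(C_T,v)$ is a rational exponential group and that $\Phi$ is an isomorphism of such groups.

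The main obstacle is the handful of axioms that mention length, namely the equality case of the height inequality and items 5 and 6, since $l(p_T)$ is only the number of non-tree edges of $p$, so that \emph{reduced in} $C_T$ is not literally \emph{reduced in} $C$. To bridge this I would study the geodesic lift $p = g_0 e_1 g_1 \cdots e_n g_n$ of a reduced edge-word $w = \hat e_1\cdots\hat e_n$, where the $g_i$ are tree geodesics joining the endpoints of successive non-tree edges, and show that $w$ reduced (resp. cyclically reduced) in $C_T$ forces $p$ to be reduced (resp. cyclically reduced) in $C$, the point being that a geodesic in a tree never backtracks and any surviving cancellation in $p$ would have to occur between the $e_i$, contradicting reducedness of $w$. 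Granting this, the equality in item 3 follows from item 3 for $C$, item 5 follows from item 5 for $C$ applied to the lift, and item 6 follows from item 6 for $C$ together with part 1 of the Lemma, which upgrades the resulting homotopy to the genuine equality $w = w'$ of cycles.
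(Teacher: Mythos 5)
Your proposal is correct and follows essentially the same route as the paper: the same map $[p]\mapsto[p_T]$, the same reliance on the preceding lemma (well-definedness and injectivity from its part 2), transport of the rational-complex axioms through that correspondence, and the same tree-path lift $p=u_1e_1w_1\cdots u_ne_nw_n$ to handle the length-sensitive items 5 and 6. If anything, you are more explicit than the paper, which leaves surjectivity to ``it is easy to check'' and silently passes over the mismatch between reducedness in $C_T$ and reducedness in $C$ that you flag and bridge.
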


\begin{proof}
Let $m$ be an integer and $(p_T)^m\simeq (q_T)^m$. Then we have $(p^m)_T\simeq (q^m)_T$ and by the lemma $p^m\simeq q^m$.
Since $C$ is a rational complex, we have $p\simeq q$ and again by the lemma above  $p_T\simeq q_T$. Other parts of the definition of a
rational complex and requirements of height function can be checked easily, for example we verify the conditions of height function. \\

1- The images of $h$ and $h_v$ are the same, and if $h(p_T)=0$ then $h_v(p)=0$, so $p\simeq 1$. This shows that $p_T\simeq 1$.
Also, if $p_T\simeq q_T$, then $p\simeq q$ and hence $h_v(p)=h_v(q)$, showing that $h(p_T)=h(q_T)$.

2- Let $m>1$ and suppose there is no $q_T$ such that $p_T\simeq q_T^m$ and $h(q_T)=h(p_T)$. Then, also there is no $q$, with
the property $p\simeq q^m$ and $h_v(q)=h_v(p)$. Hence
$$
h_v(p^{(\frac{1}{m})})=h_v(p)+1,
$$
and therefore
$$
h(p_T^{(\frac{1}{m})})=h(p_T)+1.
$$

3- We have
$$
h((p_1)_T(p_2)_T\ldots (p_k)_T)=h_v(p_1p_2\ldots p_k)\leq \max_i h_v(p_i)=\max_i h((p_i)_T),
$$
and the equality holds if $(p_1)_T(p_2)_T\ldots (p_k)_T$ is  reduced.

4- Suppose $h(p_T)$  is not minimum. Then clearly, $h_v(p)$ is also not minimum, and so $p_T$ can be expressed in terms of cycles of lower height.
Suppose we have
$$
p_T\simeq (q_1)_T^{(\frac{1}{m_1})}(q_2)_T^{(\frac{1}{m_2})}\ldots (q_k)_T^{(\frac{1}{m_k})}=(u_1)_T^{(\frac{1}{r_1})}(u_2)_T^{(\frac{1}{r_2})}\ldots (u_k)_T^{(\frac{1}{r_k})},
$$
where for all $i$, $h((q_i)_T), h((u_i)_T)<h(p_T)$ and $k$ is minimum. Then we deduce by the above lemma that
$$
p\simeq q_1^{(\frac{1}{m_1})}q_2^{(\frac{1}{m_2})}\ldots q_k^{(\frac{1}{m_k})}\simeq u_1^{(\frac{1}{r_1})}u_2^{(\frac{1}{r_2})}\ldots u_k^{(\frac{1}{r_k})}.
$$
This shows that $(q_i)_T\simeq (u_i)_T$ and $m_i=r_i$.

5- Let $p_T$ be cyclically reduced and atomic. Then clearly $p$ is also atomic. Suppose $p_T=\widehat{e_1} \ldots \widehat{e_n}$. Then we have
$$
p=u_1e_1w_1u_2e_2w_2\ldots u_ne_nw_n,
$$
where all $u_i$ and $w_i$ are pathes (not cycles) in $T$. If $p$ is not reduced, then there must be a cancelation of the second type.
So, let $q^{(\alpha)}$ be a part of $p$. Then $q^{(\alpha)}=u_ie_iw_i\ldots u_je_jw_j$, for some $i$ and $j$. Hence
$$
(q^{(\alpha)})_T=\widehat{e_i}\ldots \widehat{e_j}
$$
is a part of $p_T$, a contradiction. So, $p$ is reduced. Finally, $p$ is cyclically reduced, since, if it has the form $uqu^{_1}$, then at
the same time, $u$ will be a cycle and a part of $T$ (note that $u$ is a cycle).

6- Let $p_T$ and $q_T$ be atomic and reduced. Then $p$ and $q$ are atomic and reduced. So, if $p_T\simeq q_T$, then by the lemma,
$p\simeq q$ and hence $p=q$. So, we have $p_T=q_T$.

Finally, it is also easy to check that the map $[p]\mapsto [p_T]$ is an isomorphism between the fundamental groups of $C$ and $C_T$.
\end{proof}

The above result shows that in the investigation of the fundamental group of rational complexes, it is enough to concentrate just on
rational bouquets. In what follows $\min C$ will denote the minimum height of non-trivial elements of $C(v)$.

The proof of the next lemma is quite easy. Recall that an atomic edge in a rational bouquet $(C, v)$ is an edge $e$, such that $h(e)=\min C$.
By the requirements of the height function, we see that for an atomic edge $e$, the inverse edge $e^{-1}$ is also atomic.

\begin{lemma}
If $(C, v)$ is a rational bouquet, then it contains an atomic edge. If $e_1, \ldots, e_n$ are atomic edges, then the cycle $e_1e_2\ldots e_n$
is also atomic.
\end{lemma}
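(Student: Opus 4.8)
The lemma makes two assertions about a rational bouquet $(C,v)$: first, that an atomic edge exists, and second, that a product of atomic edges is again atomic. Let me think through each separately.

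For the existence of an atomic edge, the plan is to use the height function axioms. By axiom 1 of the height function, the image of $h_v$ is an infinite set of non-negative integers, so $\min C$, the minimum positive height, is a well-defined positive integer achieved by some cycle $p$. I would take such a $p$ of minimum positive height and argue it can be taken to be a single edge. The natural approach is to invoke axiom 3 of the height function: if $p = e_1 e_2 \cdots e_n$ is reduced, then $h_v(p) = \max_i h_v(e_i)$. Since $h_v(p) = \min C$ is the smallest positive value, and each edge $e_i$ is itself a (nontrivial) cycle so has height at least $\min C$, the maximum equals $\min C$ forces every $e_i$ to have height exactly $\min C$ (none can be larger, none can be strictly smaller since $\min C$ is minimal positive). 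Hence each $e_i$ is an atomic edge, and in particular atomic edges exist. A subtle point I would need to address is why a minimum-positive-height cycle can be taken reduced and why its constituent edges are nontrivial as cycles in the bouquet — in a bouquet every edge is a loop at $v$, so this should be immediate.

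For the second assertion, that $e_1 e_2 \cdots e_n$ is atomic when each $e_i$ is atomic, I would again lean on axiom 3 of the height function, namely the inequality
\[
h_v(e_1 e_2 \cdots e_n) \leq \max_i h_v(e_i) = \min C.
\]
Since the product is a nontrivial cycle (one must check it is not homotopic to the identity, or handle that degenerate case separately), its height is at least the minimum positive value $\min C$. Combining the two inequalities gives $h_v(e_1 \cdots e_n) = \min C$, which is exactly the statement that the product is atomic. The only genuine care needed here is the lower bound: a priori the product could cancel down to a trivial cycle, which would have height $0 < \min C$. I would note that if the product is nontrivial its height is $\geq \min C$ by minimality of $\min C$ among positive values, and if it is trivial the claim is interpreted vacuously or the statement tacitly assumes a nondegenerate product.

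The main obstacle, as the author already signals by calling the proof "quite easy," is not conceptual depth but rather pinning down the reducedness hypothesis in axiom 3: the equality $h_v(p_1 \cdots p_k) = \max_i h_v(p_i)$ holds only for reduced products, whereas the upper bound inequality holds unconditionally. For the existence part I need reducedness to pull height information down to individual edges, so I would start from a reduced representative of a minimal-height cycle; for the product part I only need the unconditional inequality together with minimality of $\min C$, so no reducedness is required there. Keeping straight which direction each inequality runs, and confirming that atomic edges (and their products) are genuinely nontrivial cycles, is the entire content of the argument.
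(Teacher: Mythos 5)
Your proof is correct, and in fact the paper gives no proof of this lemma at all --- it is dismissed as ``quite easy'' --- so your argument supplies exactly the intended content: everything follows from axioms 1 and 3 of the height function. Two remarks. First, in the existence half you do not actually need every edge of the reduced representative $p = e_1 e_2 \cdots e_n$ to be non-null-homotopic: the equality case of axiom 3 gives $\max_i h_v(e_i) = h_v(p) = \min C$, so some single $e_i$ attains this maximum and is therefore atomic, which is all the lemma asks; this lets you sidestep the point you flagged as ``should be immediate'' (nothing in the axioms literally forbids an edge of the bouquet from being null-homotopic, but for existence it does not matter). Second, your caveat about the second half is a genuine and correct observation about the statement itself, not a defect of your proof: as literally stated the claim fails for a product such as $ee^{-1}$ (both factors atomic, by the paper's own remark that $e^{-1}$ is atomic whenever $e$ is), since that product is null-homotopic and so has height $0 \neq \min C$. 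The lemma must therefore be read with the tacit hypothesis that $e_1 e_2 \cdots e_n$ is not homotopic to the trivial cycle, and this reading is consistent with the lemma's only use in the paper, namely the proof of Lemma 2.8, where the relevant products of atomic edges are verified to be reduced before their atomicity is exploited.
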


\begin{lemma}
Suppose
$$
F=\langle [e]: e\in C \ \ and \ \ e\ is\ atomic\ edge\rangle.
$$
Then $F$ is a free group.
\end{lemma}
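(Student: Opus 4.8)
The plan is to exhibit an explicit basis for $F$ and to show, via the uniqueness built into the height function, that $F$ is the ordinary free group on that basis. Since the inverse of an atomic edge is again atomic, I first fix a set $A$ of atomic edges containing exactly one of each pair $\{e,e^{-1}\}$, and I let $\Phi$ be the abstract free group on symbols $\{x_e : e\in A\}$. There is an obvious surjective homomorphism
\[
\psi\colon \Phi\to F,\qquad x_e\mapsto [e],
\]
(with $x_e^{-1}\mapsto [e^{-1}]=[e]^{-1}$), and the whole statement reduces to proving that $\psi$ is injective.

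Take a nontrivial reduced word of $\Phi$; its image is $[f_1f_2\cdots f_n]$, where each $f_i$ is an atomic edge and $f_{i+1}\neq f_i^{-1}$, so that the cycle $p=f_1f_2\cdots f_n$ is free-reduced at the level of edges. The previous lemma shows that any product of atomic edges is either homotopic to $1$ or itself atomic. I would therefore argue that such a free-reduced $p$ is in fact \emph{reduced} in $C(v)$, hence atomic, hence $h_v(p)=\min C>0=h_v(1)$, so $p\not\simeq 1$; this already forces the kernel of $\psi$ to be trivial. More strongly, if two such free-reduced words have homotopic images $p\simeq p'$, then both cycles are atomic and reduced, and axiom $6$ of the height function forces $p=p'$ as edge-paths, so the two words coincide. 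Thus $\psi$ is injective and $F\cong\Phi$ is free on $\{[e]:e\in A\}$.

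The one substantial point, and the main obstacle, is the claim that a free-reduced word of atomic edges is already homotopy-reduced; equivalently, that on products of atomic edges the homotopy relation collapses to ordinary free cancellation. Since $p$ is free-reduced, no cancellation of the first type applies, so I must rule out cancellations of the second type. Here I would use a height analysis: any contiguous sub-path of $p$ is again a product of atomic edges, so by the lemma its height is $0$ or $\min C$. If a sub-path $q^{(\alpha)}$ occurring in a second-type move had $\alpha=a/b$ with $b>1$, then $q^{(\alpha)}$ would be a nontrivial power of $q^{(1/b)}$, whose height exceeds $h_v(q)$ by axiom $2$; comparing with the only admissible values $0$ and $\min C$ forces $q\simeq 1$ and hence the sub-path to be trivial, contradicting free-reducedness. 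For integer exponents the move $q^{(a)}q^{(b)}\mapsto q^{(a+b)}$ neither shortens $p$ nor creates a new free cancellation, because powers of an atomic, cyclically reduced cycle remain reduced by axiom $5$ and are pinned down by axiom $6$. Consequently no second-type reduction can shorten a free-reduced atomic word, which is exactly the reducedness needed above.

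I would close by remarking that this is the rational-exponential analogue of the classical fact that the fundamental group of a one-dimensional complex is free: the atomic edges play the role of an ordinary edge-basis, the height function supplies the normal form, and axioms $5$ and $6$ guarantee that homotopy classes of reduced atomic cycles are in bijection with free-reduced edge-words.
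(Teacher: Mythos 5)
Your overall plan---exhibit the obvious surjection $\psi\colon\Phi\to F$ from the free group on one edge per atomic pair, and reduce everything to the claim that a free-reduced word $f_1\cdots f_n$ in atomic edges is a \emph{reduced} cycle---is a legitimate alternative route, and the deduction of injectivity from that claim (via axiom 6 and the height function) is fine. But your proof of the key claim, which you correctly identify as the main obstacle, does not hold up. First, you apply axiom 2 as if it were unconditional: it gives $h_v(q^{(1/b)})=h_v(q)+1$ \emph{only when} $q$ has no $b$-th root of the same height, and in the situation you need to exclude this hypothesis typically fails. Concretely, take $q=e^{(2)}$ for an atomic edge $e$; then $q\simeq e^2$ has the root $e$ of the same height (both have height $\min C$, using axioms 3 and 5), so $q^{(1/2)}=e$ shows no height jump, and a part such as $ee=q^{(1/2)}q^{(1/2)}$, with fractional exponents, can perfectly well sit inside a free-reduced atomic word. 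So the dichotomy ``fractional exponent $\Rightarrow$ height jump $\Rightarrow$ $q\simeq 1$'' collapses exactly in the case of interest. Second, the integer case is asserted rather than proved: axiom 5 controls the concatenated powers $q^m$ of an \emph{atomic, cyclically reduced} cycle, whereas the cycle $q$ occurring in a type-2 move is arbitrary and the move involves the action values $q^{(m)}$, about whose length the axioms say nothing directly. Third, even if both cases were settled, you would only have shown that no \emph{single} cancelation shortens $p$; reducedness means minimal length over the whole homotopy class, which is generated by cancelations \emph{and their inverses}, so a zigzag that first inserts and then deletes is not excluded without some confluence-type argument.

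For contrast, the paper avoids proving your key claim altogether by a minimal-counterexample argument that needs much weaker inputs. Assuming a nontrivial relation $e_1^{m_1}e_2^{m_2}\cdots e_n^{m_n}\simeq 1$ with $m_1+\cdots+m_n$ minimal, it rewrites it as $e_1^{m_1}\simeq e_n^{-m_n}\cdots e_2^{-m_2}$. The left side is reduced directly by axiom 5, since a single atomic edge is cyclically reduced; the right side is reduced because otherwise one could replace it by its reduced form (again a word in atomic edges) and obtain a relation with strictly smaller exponent sum, contradicting minimality. Both sides being atomic and reduced, axiom 6 upgrades the homotopy to an equality of edge-paths, which forces $e_1=e_2^{-1}$ and contradicts the reducedness of the word. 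If you want to salvage your normal-form approach, you should prove your key claim by a similar minimality or induction-on-length device rather than by the height dichotomy; as written, the height analysis is the step that fails.
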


\begin{proof}
If this is not the case, then there is a relation of the form
$$
e_1^{m_1}e_2^{m_2}\ldots e_n^{m_n}\simeq 1,
$$
where each $e_i$ is atomic, any $m_i$ is a positive integer and the sum $m_1+m_2+\cdots+m_n$ is minimum. We have
$$
e_1^{m_1}\simeq e_n^{-m_n}\ldots e_2^{-m_2}.
$$
The right hand side is reduced, since if it is not reduced, then
$$
e_2^{m_2}\ldots e_n^{m_n}\simeq f_1^{r_1}\ldots f_k^{r_k},
$$
where the right hand side is reduced and its edges are atomic. Also, we have $r_1+\cdots+r_k<m_2+\cdots+m_n$. Now, we have
$$
e_1^{m_1}f_1^{r_1}\ldots f_k^{r_k}\simeq 1,
$$
contradicting the minimality of $m_1+m_2+\cdots+m_n$. So, by items 5 and 6 in the definition of rational complex, we have
$$
e_1^{m_1}=e_n^{-m_n}\ldots e_2^{-m_2},
$$
and this implies for example $e_1=e_2^{-1}$ which is a contradiction. Hence, $F$ is an ordinary free group.
\end{proof}

\begin{corollary}
Let $C$ be a rational bouquet with the vertex $v$. Then $\pi_{\mathbb{Q}}(C, v)$ is free in the variety of rational exponential groups.
\end{corollary}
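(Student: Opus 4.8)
The plan is to identify $\pi_{\mathbb{Q}}(C,v)$ with the tensor $\mathbb{Q}$-completion of the ordinary free group on the atomic edges, and then appeal to the stated fact that $F_{\mathbb{Q}}(X)=F(X)^{\mathbb{Q}}$ is the free object of the variety. First I would fix a set $X$ consisting of one atomic edge from each inverse pair $\{e,e^{-1}\}$. By the preceding lemma the group $F=\langle [e]:\ e\text{ atomic}\rangle$ is an ordinary free group, and the minimisation argument in that proof in fact shows that the classes $[e]$ with $e\in X$ satisfy no nontrivial relation, so that $F\cong F(X)$ freely on $X$. The inclusion $\lambda\colon F\hookrightarrow\pi_{\mathbb{Q}}(C,v)$ is then a homomorphism of $F(X)$ into a rational exponential group, so by the universal property of the tensor completion it factors through a canonical $\mathbb{Q}$-homomorphism
\[
\Phi\colon F_{\mathbb{Q}}(X)=F^{\mathbb{Q}}\longrightarrow \pi_{\mathbb{Q}}(C,v)
\]
restricting to $\lambda$ on $F$. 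Since $F_{\mathbb{Q}}(X)$ is free in the variety, it suffices to prove that $\Phi$ is a $\mathbb{Q}$-isomorphism.

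Surjectivity is the easy half and rests entirely on the height function. By induction on $h$ I would show that every class lies in the $\mathbb{Q}$-subgroup $\mathbb{Q}$-generated by $\lambda(F)$. For the base case, if $p$ is atomic take its reduced representative $p=e_1\cdots e_n$; the equality clause of item~3 of the height function gives $h(p)=\max_i h(e_i)=\min C$, so each $e_i$ is atomic and $[p]=[e_1]\cdots[e_n]\in\lambda(F)$. For $h(p)>\min C$, item~4 furnishes a decomposition $[p]=[q_1]^{1/m_1}\cdots[q_k]^{1/m_k}$ with every $h(q_i)<h(p)$, and the inductive hypothesis places each $[q_i]$, hence $[p]$, in the desired $\mathbb{Q}$-subgroup. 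Thus $\lambda(F)$ $\mathbb{Q}$-generates $\pi_{\mathbb{Q}}(C,v)$ and $\Phi$ is onto.

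The injectivity of $\Phi$ is where the real work lies, and I expect it to be the main obstacle. Rather than wrestle with the internal structure of $F_{\mathbb{Q}}(X)$, I would construct an explicit inverse $\Psi\colon\pi_{\mathbb{Q}}(C,v)\to F_{\mathbb{Q}}(X)$ by recursion on the height. On atomic classes $\Psi$ is forced to be the embedding $F\cong F(X)\hookrightarrow F_{\mathbb{Q}}(X)$; items~5 and~6, which guarantee that atomic reduced cycles are rigid under taking powers and that distinct atomic reduced cycles are never homotopic, are exactly what make this bottom-level assignment well defined. For a class of larger height I would set $\Psi([p])=\Psi([q_1])^{1/m_1}\cdots\Psi([q_k])^{1/m_k}$ using a decomposition of minimal length $k$ from item~4. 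The delicate point, and the crux of the whole argument, is that this is independent of choices and respects homotopy: here the \emph{uniqueness} clause of item~4 (minimal $k$ determines the $q_i$ up to homotopy and the $m_i$ exactly) together with the cancellation axiom~1 is precisely what is needed.

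Finally I would verify that $\Psi$ is genuinely a $\mathbb{Q}$-homomorphism, checking multiplicativity against the subadditivity of $h$ in item~3 and compatibility with the $\mathbb{Q}$-action against items~2 and~3 of the compatible-action axioms; uniqueness of the decompositions keeps the recursion consistent throughout. Since $\Phi$ and $\Psi$ are mutually inverse on the atomic edges $[e]$, $e\in X$, which form a $\mathbb{Q}$-generating set, the compositions $\Phi\circ\Psi$ and $\Psi\circ\Phi$ are the identity. Hence $\pi_{\mathbb{Q}}(C,v)\cong F_{\mathbb{Q}}(X)$, and therefore $\pi_{\mathbb{Q}}(C,v)$ is free in the variety of rational exponential groups.
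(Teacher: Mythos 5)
Your proof is correct and is essentially the paper's own argument: both rest on the lemma that the subgroup $F$ generated by the atomic edge classes is an ordinary free group, together with a recursion on the height function using the unique minimal-length decomposition of item~4, to identify $\pi_{\mathbb{Q}}(C,v)$ with the tensor completion $F^{\mathbb{Q}}\cong F_{\mathbb{Q}}(X)$. The only difference is packaging: the paper runs this recursion for an arbitrary rational group $H$ and homomorphism $f\colon F\to H$, thereby verifying the universal property of the tensor completion directly, whereas you run the identical recursion only for the single target $F_{\mathbb{Q}}(X)$ (your $\Psi$ is precisely the paper's $g$ with $H=F_{\mathbb{Q}}(X)$ and $f$ the inclusion) and then check that $\Phi$ and $\Psi$ are mutually inverse.
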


\begin{proof}
Suppose
$$
F=\langle [e]: e\in C \ \ and \ \ e\ is\ atomic\ edge\rangle.
$$
As we saw, $F$ is a free group. We prove that
$$
\pi_{\mathbb{Q}}(C, v)\cong F^{\mathbb{Q}}.
$$
Let $\lambda:F\to \pi_{\mathbb{Q}}(C, v)$ be the inclusion map. Clearly $F$ $\mathbb{Q}$-generates $\pi_{\mathbb{Q}}(C, v)$ as a $\mathbb{Q}$-group.
Let $H$ be any rational group and suppose $f:F\to H$ is a homomorphism. We show that $f$ can be extended to whole group $\pi_{\mathbb{Q}}(C, v)$.
Define $g:\pi_{\mathbb{Q}}(C, v)\to H$ by induction on $h_v(p)$. If $h_v(p)=\min C$ then put $g([p])=f([p])$. Let $h_v(p)>\min C$.
We know that there is a unique expression
$$
p\simeq q_1^{(\frac{1}{m_1})}q_2^{(\frac{1}{m_2})}\ldots q_k^{(\frac{1}{m_k})},
$$
with $h_v(q_i)<h_v(p)$, such that $k$ is minimum. So we can define
$$
g([p])=g([q_1])^{\frac{1}{m_1}}g([q_2])^{\frac{1}{m_2}}\ldots g([q_k])^{\frac{1}{m_k}}.
$$
Now, we have a $\mathbb{Q}$-homomorphism $g$ such that $g\circ\lambda=f$. Hence $\pi_{\mathbb{Q}}(C, v)\cong F^{\mathbb{Q}}$.
On the other hand we know that $F^{\mathbb{Q}}$ is free in the variety of rational exponential groups.
\end{proof}


\section{Colored rational complexes}

In this section, we extend our definitions, in order that any rational exponential group can be represented as the fundamental group of a (colored)
rational complex.

\begin{definition}
Let $C$ be a rational complex and $v$ be an arbitrary vertex. Let $\Phi\subseteq C(v)$. Then the triple $(C, \Phi, v)$ is called a
{\em colored rational complex}.
\end{definition}

Suppose  $v^{\prime}$ is another vertex and $u$ is a path from $v^{\prime}$ to $v$. Let
$$
\Phi^{\prime}=\{ upu^{-1}: p\in \Phi\}\subseteq C(v^{\prime}).
$$
Then $(C, \Phi^{\prime}, v^{\prime})$ is also a colored rational complex.

\begin{definition}
Let $(C, \Phi, v)$ be a colored rational complex. We introduce a cancelation of the third type as follows:
\begin{center}
any part of the form $p\in \Phi$ can be canceled.
\end{center}
Two cycles $p_1$ and $p_2$ are called {\em color homotopic}, if we can transform one of them to another by a number of cancelation operations of
the first, second or third types. In this case, we write $p_1\simeq_{\ast} p_2$. The colored homotopy class of $p$ is denoted by $[p]_{\ast}$.

\end{definition}

From now on, in the case of colored rational complexes, we use the stronger  assumption
$$
p_1^m\simeq_{\ast}p_2^m\Rightarrow p_1\simeq_{\ast} p_2,
$$
rather than
$$
p_1^m\simeq p_2^m\Rightarrow p_1\simeq p_2.
$$

\begin{definition}
We define the fundamental group
$$
\pi_{\mathbb{Q}}(C, \Phi, v)=\{ [p]_{\ast}:\ p\in C(v)\}.
$$
Clearly, this is a rational exponential group and in fact, we have
$$
\pi_{\mathbb{Q}}(C, \Phi, v)=\frac{\pi_{\mathbb{Q}}(C, v)}{K},
$$
where $K$ is the normal $\mathbb{Q}$-subgroup generated by  the set
$$
\{ [p]_{\ast}:\ p\in \Phi\}.
$$
\end{definition}

\begin{definition}
Let $(C, \Phi, v)$ be a colored rational complex and $T\subseteq C$ be a maximal tree. Let $C_T$ be the retract of $C$ with respect
to $T$ and the vertex $v$. Suppose
$$
\Phi_T=\{ p_T:\ p\in \Phi\}.
$$
Then $(C_T, \Phi_T, v)$ is called the retract of $(C, \Phi, v)$ with respect to $T$.
\end{definition}

Obviously, we must prove that this retract is really a colored rational complex. In the next lemma, we do this.

\begin{lemma}
$(C_T, \Phi_T, v)$ is a colored rational complex.
\end{lemma}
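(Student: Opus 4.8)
The plan is to reduce everything to the already-established fact that the underlying retract $C_T$ is a rational complex. By the theorem of the previous section, $C_T$ is a rational bouquet; in particular it satisfies all the axioms of a rational complex, its unique vertex is $v$, and the retraction $p \mapsto p_T$ carries $C(v)$ into $C_T(v)$. Consequently $\Phi_T = \{p_T : p \in \Phi\} \subseteq C_T(v)$, so the triple $(C_T, \Phi_T, v)$ is at least a well-formed colored rational complex in the sense of the definition. The only substantive point is to verify that $C_T$ continues to satisfy the strengthened cancellation hypothesis used in the colored setting, namely $(p_T)^m \simeq_{\ast} (q_T)^m \Rightarrow p_T \simeq_{\ast} q_T$.

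First I would prove the colored analogue of the transfer lemma from Section 2: for $p, q \in C(v)$,
$$p \simeq_{\ast} q \iff p_T \simeq_{\ast} q_T.$$
The earlier lemma already handles cancellations of the first and second types, so the new content is the third type (deletion of a part lying in $\Phi$, respectively $\Phi_T$). In the forward direction this is immediate: if a subword $r \in \Phi$ of $p$ is deleted, then the corresponding subword of $p_T$ is exactly $r_T \in \Phi_T$, so the cancellation descends. For the converse I would lift a $\Phi_T$-cancellation back to $C$: if a subword of $p_T$ equals $s_T$ for some $s = g_1 \cdots g_l \in \Phi$, then the corresponding portion of $p$ has the form $w_0 g_1 w_1 \cdots g_l w_l$, with the $w_i$ paths in $T$. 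Using that $T$ is a tree, exactly as in the proof of the Section 2 lemma, where the alternative would force a nontrivial cycle in $T$, the endpoints match so that this portion is color homotopic, through tree backtrack cancellations of the first type, to the honest $\Phi$-part $s$, which may then be deleted. This yields the equivalence.

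With the transfer lemma in hand, the strengthened hypothesis transfers mechanically. Suppose $(p_T)^m \simeq_{\ast} (q_T)^m$. Since the retraction respects concatenation, $(p_T)^m = (p^m)_T$ and $(q_T)^m = (q^m)_T$, so $(p^m)_T \simeq_{\ast} (q^m)_T$; the transfer lemma then gives $p^m \simeq_{\ast} q^m$. Because $(C, \Phi, v)$ satisfies the strengthened cancellation hypothesis, $p \simeq_{\ast} q$, and applying the transfer lemma once more yields $p_T \simeq_{\ast} q_T$, as required.

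The main obstacle I expect is the lifting direction of the transfer lemma: showing that a $\Phi_T$-part appearing inside $p_T$ genuinely arises from a $\Phi$-part of $p$ up to tree backtracks. This is the colored counterpart of the most delicate step in the Section 2 lemma, the case analysis in which a spurious interleaving of tree edges is ruled out because $T$ contains no nontrivial cycle, and care is needed to ensure that the interleaved tree paths $w_i$ cancel correctly and do not disturb the remainder of the word.
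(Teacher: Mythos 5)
Your proposal is correct and follows essentially the same route as the paper: reduce to the strengthened cancellation property, establish that $\Phi$-parts of $p$ correspond to $\Phi_T$-parts of $p_T$ in both directions (the lifting direction being exactly the paper's tree-cycle argument, where interleaved tree paths are forced to match lest $T$ contain a nontrivial cycle), and then conclude via the chain $(p_T)^m \simeq_{\ast} (q_T)^m \Rightarrow (p^m)_T \simeq_{\ast} (q^m)_T \Rightarrow p^m \simeq_{\ast} q^m \Rightarrow p \simeq_{\ast} q \Rightarrow p_T \simeq_{\ast} q_T$. This matches the paper's proof step for step, including correctly identifying the lifting of $\Phi_T$-cancellations as the delicate point.
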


\begin{proof}
We know that $(C_T, v)$ is a rational bouquet. So, we must prove the following implication
$$
(p_T)^m\simeq_{\ast} (q_T)^m \Rightarrow p_T\simeq_{\ast}q_T.
$$
Equivalently, we prove that if $(p_T)^m\simeq_{\ast}1$, then $p_T\simeq_{\ast} 1$. Remember that the cancelation operations
of types first and second are verified before, so we show that if $p_1\in \Phi$ is a part of $p$, then $(p_1)_T$ is a part of $p_T$ and
the converse is also true in some sense. The first part is trivial, so, let $(p_1)_T$ is a part of $p_T$. Let $p=e_1e_2\dots e_n$ and
$p_T=\widehat{e_{j_1}}\ldots \widehat{e_{j_t}}$, where
$$
1\leq j_1\leq \cdots\leq j_t\leq n,
$$
and $e_{j_i}\in C\setminus T$. We prove that there is a cycle $q$, colore homotopic to $p$, such that $p_1$ appears in $q$. Let
$$
(p_1)_T=\widehat{e_{j_r}}\ldots \widehat{e_{j_s}},
$$
and suppose $p_1=f_1f_2\ldots f_l$. So, for any $r\leq t\leq s$, there exists an index $i_t$ such that $e_{j_t}=f_{i_t}$. Now, for any $t$, the path
$$
e_{j_t+1}\ldots e_{j_{t+1}-1}f_{i_{t+1}-1}^{-1}\ldots f_{i_t+1}^{-1}
$$
is a cycle in $T$, and this is not possible, except in the case
$$
e_{j_t+1}=f_{i_t+1}, \ldots, e_{j_{t+1}-1}=f_{i_{t+1}-1}.
$$
In other word
$$
f_{i_r}f_{i_r+1}\ldots f_{i_s}=e_{j_r}e_{j_r+1}\ldots e_{j_s}.
$$
If $e_1\ldots e_{j_r-1}\neq f_1\dots f_{i_r-1}$, then we put
$$
q_1= e_1\ldots e_{j_r-1}f_{i_r-1}^{-1}\ldots f_1^{-1},\ \ and\ \ q_2=e_n^{-1}\ldots e_{j_s+1}^{-1}f_{i_s+1}\ldots f_l.
$$
Then we have $p\simeq_{\ast}q_1p_1q_2$ and so $p_1$ is a part of some colore homotop of $p$. Now, we can complete the proof as follows: Let
$$
(p_T)^m\simeq_{\ast} (q_T)^m.
$$
Then we have
$$
(p^m)_T\simeq_{\ast} (q^m)_T,
$$
and therefore $p^m\simeq_{\ast}q^m$. Since $C$ is a rational complex, we must have $p\simeq_{\ast}q$. So, $p_T\simeq_{\ast}q_T$.
\end{proof}

We close this section with a generalization of Theorem 2.6.

\begin{theorem}
Let $(C, \Phi, v)$ be a colored rational complex and $T$ be a maximal tree. Then we have
$$
\pi_{\mathbb{Q}}(C, \Phi, v)\cong \pi_{\mathbb{Q}}(C_T, \Phi_T, v).
$$
\end{theorem}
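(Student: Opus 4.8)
The plan is to obtain this as a quotient-level consequence of Theorem 2.6. By the description of the fundamental group of a colored complex, we may write
$$
\pi_{\mathbb{Q}}(C, \Phi, v)=\pi_{\mathbb{Q}}(C, v)/K,\qquad \pi_{\mathbb{Q}}(C_T, \Phi_T, v)=\pi_{\mathbb{Q}}(C_T, v)/K_T,
$$
where $K$ is the normal $\mathbb{Q}$-subgroup of $\pi_{\mathbb{Q}}(C, v)$ generated by $\{[p]:\ p\in\Phi\}$, and $K_T$ is the normal $\mathbb{Q}$-subgroup of $\pi_{\mathbb{Q}}(C_T, v)$ generated by $\{[p_T]:\ p\in\Phi\}=\{[q]:\ q\in\Phi_T\}$. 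By Theorem 2.6 the assignment $\theta\colon [p]\mapsto [p_T]$ is a $\mathbb{Q}$-isomorphism of $\pi_{\mathbb{Q}}(C, v)$ onto $\pi_{\mathbb{Q}}(C_T, v)$, so the whole task reduces to showing that $\theta$ carries $K$ onto $K_T$ and then passing to quotients.

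First I would note that $\theta$ sends the chosen generating set of $K$ bijectively onto the chosen generating set of $K_T$, since $\theta([p])=[p_T]$ for every $p\in\Phi$. Because $\theta$ is a bijective $\mathbb{Q}$-homomorphism it commutes with products, inverses, conjugation and the $\mathbb{Q}$-action, hence it sends the normal $\mathbb{Q}$-subgroup generated by a set $S$ onto the normal $\mathbb{Q}$-subgroup generated by $\theta(S)$; applying this to $S=\{[p]:\ p\in\Phi\}$ gives $\theta(K)=K_T$. The saturation clause in the definition of a normal $\mathbb{Q}$-subgroup (that $a^mK=b^mK$ for all $m$ forces $aK=bK$) transfers across the bijection $\theta$ without difficulty, so $K_T=\theta(K)$ is again a genuine normal $\mathbb{Q}$-subgroup, as it must be. With $\theta(K)=K_T$ in hand, the standard fact that a $\mathbb{Q}$-isomorphism $\theta\colon G\to G'$ with $\theta(K)=K_T$ descends to a $\mathbb{Q}$-isomorphism $\bar{\theta}\colon G/K\to G'/K_T$, $\bar{\theta}(gK)=\theta(g)K_T$, yields at once $\pi_{\mathbb{Q}}(C, \Phi, v)\cong\pi_{\mathbb{Q}}(C_T, \Phi_T, v)$.

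The one place demanding real care—and the step I expect to be the main obstacle—is precisely the claim $\theta(K)=K_T$, that is, that forming the normal $\mathbb{Q}$-closure of a set commutes with a $\mathbb{Q}$-isomorphism; this is where the interaction between the ordinary normal closure, the $\mathbb{Q}$-action, and the saturation condition must each be checked to be respected by $\theta$. Should one prefer a purely combinatorial route that avoids this bookkeeping, one can instead verify directly that $[p]_{\ast}\mapsto [p_T]_{\ast}$ is a well-defined bijective $\mathbb{Q}$-homomorphism: that it respects the product and the $\mathbb{Q}$-action is immediate from $(pq)_T=p_Tq_T$ and $(p^{(\alpha)})_T=(p_T)^{(\alpha)}$, while well-definedness together with injectivity amount to the equivalence $p\simeq_{\ast} q \iff p_T\simeq_{\ast} q_T$, whose two directions are exactly the transfer of the three cancellation types between $C$ and $C_T$—types $1$ and $2$ being the content of the retract lemma of Section 2, and type $3$ that of the preceding lemma—and surjectivity is clear because every cycle of $C_T$ is of the form $p_T$.
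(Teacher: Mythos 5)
Your proposal is correct, but your primary argument takes a genuinely different route from the paper's. The paper's own ``proof'' is a two-line remark: repeat the proof of Theorem 2.6 verbatim with $\simeq$ replaced by $\simeq_{\ast}$, the essential new input being Lemma 3.5 --- both the implication $(p_T)^m\simeq_{\ast}(q_T)^m\Rightarrow p_T\simeq_{\ast}q_T$ and the transfer of type-3 cancellations between $C$ and $C_T$. That is exactly your fallback combinatorial argument in the final paragraph. Your main route instead works at the quotient level: it takes the identification $\pi_{\mathbb{Q}}(C,\Phi,v)=\pi_{\mathbb{Q}}(C,v)/K$ from Definition 3.3 (correctly reading the generating set as $\{[p]:\ p\in\Phi\}$ --- the paper's $[p]_{\ast}$ there is evidently a typo), transports the generators through the isomorphism $\theta$ of Theorem 2.6, observes that forming the saturated normal $\mathbb{Q}$-closure commutes with a $\mathbb{Q}$-isomorphism, and descends to quotients. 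This buys a cleaner argument with no re-verification of cancellation bookkeeping; and your worry that $\theta(K)=K_T$ is the delicate step is actually overstated, since saturated normal $\mathbb{Q}$-subgroups form an isomorphism-invariant closure system (an intersection of saturated normal $\mathbb{Q}$-subgroups is again one), so the fact is immediate. The costs are twofold: your route rests on the quotient description in Definition 3.3, which the paper asserts without proof, whereas the paper's route is self-contained at the level of cycles; and, to apply that description to $(C_T,\Phi_T,v)$ at all, you must know that the retract is a colored rational complex --- which is precisely Lemma 3.5. So the dependence on Lemma 3.5 has not disappeared from your main argument; it is hidden in the applicability of Definition 3.3 to the retract, and you should cite it there rather than only in your fallback.
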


Note that in order to prove this theorem, one has to repeat the proof of Theorem 2.6 with respect to "$\simeq_{\ast}$". In particular, one needs
$$
(p_T)^m\simeq_{\ast}(q_T)^m\Rightarrow p_T\simeq_{\ast}q_T,
$$
proved in Lemma 3.5.

\section{Exponential groups as fundamental groups}
In Section 2, we showed that for any rational complex $(C, v)$, the fundamental group $\pi_{\mathbb{Q}}(C, v)$ is free in the
variety of rational exponential groups. In this section, we prove that the converse is also true. More generally, we show that
any rational exponential group is the fundamental group of some colored rational complex.

In this section, $X$ is an arbitrary set and $F_{\mathbb{Q}}(X)$ is the free  rational exponential group, generated by $X$.
Let $A_0=X^{-1}\cup X=X^{\pm}$. Let
$$
A_{n+1}=\{ w^{\frac{1}{m}}:\ 2\leq m, w\in \langle A_n\rangle \},
$$
where $\langle A_n\rangle$ is the ordinary subgroup of $F_{\mathbb{Q}}(X)$, generated by $A_n$.

\begin{lemma}
We have $A_0\subseteq A_1\subseteq A_2\subseteq \cdots$, and $F_{\mathbb{Q}}(X)=\cup_nA_n$.
\end{lemma}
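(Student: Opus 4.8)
The plan is to derive both claims from one structural observation: each ordinary subgroup $\langle A_n\rangle$ is already absorbed into the next layer, that is, $\langle A_n\rangle\subseteq A_{n+1}$ for every $n$. To verify this I would take an arbitrary $w\in\langle A_n\rangle$ and write $w=(w^2)^{\frac{1}{2}}$. Since $\langle A_n\rangle$ is a group, $w^2\in\langle A_n\rangle$, and the exponent $2$ satisfies $2\le m$; hence $w$ has exactly the defining form of a member of $A_{n+1}$. (Uniqueness of square roots in $F_{\mathbb{Q}}(X)$ is what guarantees $(w^2)^{\frac{1}{2}}=w$.) This single inclusion carries most of the argument.

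The ascending chain then follows at once, because $A_n\subseteq\langle A_n\rangle\subseteq A_{n+1}$. Writing $U=\bigcup_n A_n$, the same inclusion shows $U=\bigcup_n\langle A_n\rangle$ and that $U$ is an ordinary subgroup of $F_{\mathbb{Q}}(X)$: any finitely many elements of $U$ lie in a common $A_N$ (the chain being ascending), hence in $\langle A_N\rangle$, so their products and inverses stay inside $\langle A_N\rangle\subseteq A_{N+1}\subseteq U$. The inclusion $U\subseteq F_{\mathbb{Q}}(X)$ is a routine induction, starting from $A_0=X^{\pm}$ and noting that each subsequent layer consists of roots of elements already present.

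Next I would promote $U$ to a $\mathbb{Q}$-subgroup. Given $u\in U$ and a rational $q=\frac{p}{m}$ with $m\ge 1$ and $p\in\mathbb{Z}$, axiom 3 lets me write $u^{q}=(u^{\frac{1}{m}})^{p}$. Picking $n$ with $u\in A_n\subseteq\langle A_n\rangle$, the root $u^{\frac{1}{m}}$ lies in $A_{n+1}\subseteq U$ whenever $m\ge 2$ (the case $m=1$ being just an ordinary power), and the integer power then remains in the subgroup $U$. Thus $U$ is closed under the $\mathbb{Q}$-action, and it obviously contains $X\subseteq A_0$.

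Finally, since $F_{\mathbb{Q}}(X)$ is $\mathbb{Q}$-generated by $X$, the only $\mathbb{Q}$-subgroup containing $X$ is $F_{\mathbb{Q}}(X)$ itself; combined with $U\subseteq F_{\mathbb{Q}}(X)$ this forces $U=F_{\mathbb{Q}}(X)$. I do not expect a genuine obstacle in this lemma; the only points needing care are the foundational inclusion $\langle A_n\rangle\subseteq A_{n+1}$ and the correct splitting of the rational exponent as $\frac{p}{m}$ together with uniqueness of roots, after which everything reduces to the fact that $X$ generates $F_{\mathbb{Q}}(X)$ as a $\mathbb{Q}$-group.
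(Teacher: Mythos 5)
Your proof is correct, and it reorganizes the argument in a way that differs noticeably from the paper's. The paper proves only the weaker inclusion $A_n\subseteq A_{n+1}$ (via $w=(w^m)^{\frac{1}{m}}$, the same root-of-a-power trick you use), and then handles the union by a structural induction on the expression of an arbitrary $w\in F_{\mathbb{Q}}(X)$: it writes $w=(w_1\ldots w_k)^{\frac{r}{s}}$ with each $w_i$ in some $A_{n_i}$ ``by induction,'' and absorbs the rational exponent in one shot through the doubling trick $w=\bigl((w_1\ldots w_k)^{2r}\bigr)^{\frac{1}{2s}}$, which forces the denominator $2s\geq 2$. You instead isolate the stronger key inclusion $\langle A_n\rangle\subseteq A_{n+1}$ (square-root trick), deduce that $U=\bigcup_n A_n$ is an ordinary subgroup as a directed union, split the rational exponent as root-then-integer-power $u^{\frac{p}{m}}=(u^{\frac{1}{m}})^p$ with a separate trivial case $m=1$, and close with the universal-algebra fact that a $\mathbb{Q}$-subgroup containing $X$ must be all of $F_{\mathbb{Q}}(X)$. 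The two routes rest on the same two ingredients (climbing into $A_{n+1}$ by expressing an element as a root, and the fact that $X$ $\mathbb{Q}$-generates $F_{\mathbb{Q}}(X)$), but yours makes the second ingredient explicit where the paper leaves it implicit in an induction whose parameter is never named; your version is more modular and also yields the extra observation $U=\bigcup_n\langle A_n\rangle$, while the paper's is shorter and avoids having to verify subgroup and $\mathbb{Q}$-closure properties of the union separately.
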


\begin{proof}
Let $w\in A_n$ and $m\geq 2$. Then $w^m\in \langle A_n\rangle$ and hence
$$
w=(w^m)^{\frac{1}{m}}\in A_{n+1}.
$$
So, $A_n\subseteq A_{n+1}$. Let $w\in F_{\mathbb{Q}}(X)$. We have
$$
w=(w_1\ldots w_k)^{\frac{r}{s}},
$$ where $r, s\geq 1$ and by induction, every $w_i$ belongs to some $A_{n_i}$. Therefore, if $n$ is large enough, then
$w_1, \ldots, w_k\in A_n$, and hence
$$
(w_1\ldots w_k)^{2r}\in \langle A_n\rangle.
$$
Now, we have
$$
w=((w_1\ldots w_k)^{2r})^{\frac{1}{2s}}\in A_{n+1}.
$$
\end{proof}

For $w\in A_n\setminus A_{n-1}$, we define the height of $w$ to be $h(w)=n+1$. Elements of $A_0$ have height one. Let $B_0=A_0$ and
$B_n=A_n\setminus \langle A_{n-1}\rangle$, for $n>0$. Put $B=\cup_nB_n$. Note that $B$ is a proper subset of $F_{\mathbb{Q}}(X)$.
Every element of $B_n$ is called a {\em pure radical} of height $n+1$. Obviously, for positive $n$, such an element has the
form $w=u^{\frac{1}{m}}$, where $m\geq 2$ and $u\in \langle A_{n-1}\rangle$ is not a $m$-th power (in the subgroup $\langle A_{n-1}\rangle$).
Note that, for positive $n$, the set $B_n$ is closed under the operations of root extracting. Using  pure radicals, we may define the reduced
form of the elements of $F_{\mathbb{Q}}(X)$.

\begin{definition}
Reduced forms of the elements of $F_{\mathbb{Q}}(X)$ are defined inductively as follows:\\

1- Elements of $A_0$ are reduced.

2- A pure radical $u^{\frac{1}{m}}$ is reduced, if $u$ is reduced and $h(u)<h(u^{\frac{1}{m}})$.

3- A product $w_1\ldots w_k$ is reduced, if every $w_i$ is a reduced pure radical and $k$ is minimum.
\end{definition}

Note that in number 3, we have automatically $h(w_i)\leq h(w_1\ldots w_k)$. If this is not the case, then $w_i$ will be canceled and this
contradicts the minimality of $k$. Every element of $F_{\mathbb{Q}}(X)$ can be represented in the reduced form. For example, the reduced
form of $(xy)(y^{-1}x)^{\frac{1}{2}}(xy)^{-1}$ is $(x^2y^{-1}x^{-1})^{\frac{1}{2}}$. We will prove that the reduced form of an element is unique.

\begin{lemma}
Suppose $w_1, \ldots, w_k\in B$, such that $w_iw_{i+1}\neq 1$, for any $i$ and $w_1\ldots w_k\in B$. Then, there exists an element $u\in B$ and
there are rational numbers $\alpha_1, \ldots, \alpha_k$, such that $w_i=u^{\alpha_i}$, for any $i$.
\end{lemma}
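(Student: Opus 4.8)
\emph{Strategy.} The plan is to split the statement into a commutativity assertion and a structural fact about abelian $\mathbb{Q}$-subgroups. First I observe that it suffices to prove that $w_1,\dots,w_k$ commute pairwise. Indeed, $F_{\mathbb{Q}}(X)=F(X)^{\mathbb{Q}}$ is commutative transitive and the centralizer of any nontrivial element is a one-dimensional $\mathbb{Q}$-space $\{t^q:q\in\mathbb{Q}\}$ (these are standard properties of the tensor completion; see \cite{MR1} and \cite{MR2}). Hence a pairwise commuting family of nontrivial elements lies in a single such line $L$, and each $w_i$ is a rational power of a generator of $L$. Finally, writing $w=w_1\cdots w_k\in B$ and noting $w\neq 1$, I can take the common base to be $u=w$ itself: if $w_i=t^{\beta_i}$ and $s=\sum_i\beta_i$, then $s\neq 0$ by torsion-freeness of $L$, so $t=w^{1/s}$ and $w_i=w^{\beta_i/s}$ with $u=w\in B$. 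Thus the whole lemma reduces to commutativity.

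\emph{Induction.} I would prove commutativity by induction on $N=\max_i h(w_i)$. If $N=1$, every $w_i$ is a letter of $X^{\pm}$, and $w_iw_{i+1}\neq 1$ says that the word $w_1\cdots w_k$ is freely reduced, hence an element of $\langle A_0\rangle=F(X)$. But an element of $\langle A_0\rangle$ can lie in $B$ only if it lies in $B_0$: for $n\geq 1$, membership in $B_n=A_n\setminus\langle A_{n-1}\rangle$ requires the element to be outside $\langle A_0\rangle\subseteq\langle A_{n-1}\rangle$. So $w$ is a single letter, forcing $k=1$, and there is nothing to prove.

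\emph{The step $N\geq 2$.} Here I would work inside $\langle A_{N-1}\rangle$, regarded as the root extension of $G=\langle A_{N-2}\rangle$ obtained by completing each maximal cyclic subgroup of $G$ to a copy of $\mathbb{Q}$; this is the amalgam description of the tensor completion used in \cite{MR1} and \cite{MR2}. Each factor of height $N$ lies in $B_{N-1}$, hence has the form $w_i=u_i^{1/m_i}$ with $u_i\in G$, and it sits in the $\mathbb{Q}$-line over the maximal cyclic subgroup of $G$ containing $u_i$; every factor of height less than $N$ already lies in $G$. Since all factors lie in $\langle A_{N-1}\rangle$, so does $w=w_1\cdots w_k$, whence $w\notin B_N$ and, as $w\in B$, its height is at most $N$; therefore $w$ is either an element of $G$ or a single root of an element of $G$, i.e. it has amalgam length at most one. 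On the other hand, if two height-$N$ factors came from distinct $\mathbb{Q}$-lines then, even with $G$-factors interposed, the product would be a reduced alternating sequence of amalgam length at least two, which the normal form theorem forbids from equalling an element of length at most one. Hence all height-$N$ factors lie in a single line $L$ and commute; the malnormality of $L$ (the CSA property) together with $w\in B$ then pins the interposed lower-height factors into $L$ as well, and the induction hypothesis handles what remains.

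\emph{Main obstacle.} The delicate part is exactly this amalgam analysis: controlling the cancellation between the top-height radicals and the base-group elements that separate them, and proving rigorously that a product genuinely meeting two distinct $\mathbb{Q}$-lines cannot collapse to a single pure radical. The commutative-transitivity and $\mathbb{Q}$-centralizer facts I invoke are properties of $F_{\mathbb{Q}}(X)$ available from \cite{MR1} and \cite{MR2}; the real combinatorial content of the lemma lies in this length estimate.
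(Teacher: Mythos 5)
Your opening reduction is correct and even elegant: granting that $F_{\mathbb{Q}}(X)$ is commutative transitive and that centralizers of nontrivial elements are $\mathbb{Q}$-lines (facts available from \cite{MR1}, \cite{MR2} and \cite{Olga2}, proved there independently of this paper), pairwise commuting factors all lie in one line, and since $w=w_1\cdots w_k\in B$ is nontrivial you may indeed take $u=w$. The base case $N=1$ is also fine. But this reduction moves the entire content of the lemma into your inductive step, and that step has a genuine gap --- one you yourself flag as ``the delicate part.''

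Two things go wrong there. First, in the centralizer-extension (tree product) description of $\langle A_{N-1}\rangle$ over $G=\langle A_{N-2}\rangle$, the vertex groups are $G$ together with lines over \emph{representatives of conjugacy classes} of centralizers only; a pure radical $u_i^{1/m_i}$ of height $N$ lies in a conjugate $gLg^{-1}$ of such a line, not necessarily in a vertex group. Hence $w_1\cdots w_k$ is not an alternating product of vertex-group elements, and the normal form theorem does not apply to it directly. Second, and more seriously, the principle you invoke --- that a product of elliptic elements meeting two distinct lines cannot collapse to amalgam length $\le 1$ --- is false as a tree-combinatorial statement, even without torsion. In the free product $\langle a\rangle * \langle b\rangle$, the four elliptic elements $a,\ b,\ a^{-1},\ ab^{-1}a^{-1}$ come from distinct conjugates of the factors, satisfy $w_iw_{i+1}\neq 1$, and yet their product is trivial. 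This configuration is excluded in the lemma only because $ab^{-1}a^{-1}\notin B$ and because the product is required to lie in $B$; so any correct proof must exploit the structure of pure radicals in the cancellation analysis itself, which is exactly what your sketch defers to ``the delicate part.'' (You also assert, rather than prove, that $\langle A_{N-1}\rangle$ coincides with the centralizer-extension stage of \cite{MR2}; this is plausible but needs an argument.) For comparison, the paper's proof is a self-contained induction on $h(w)$: it writes $w=u^{\frac{1}{m}}$, expresses $u=u_1\cdots u_r$ in reduced form, and analyzes the cancellation in $u_1=(w_1\cdots w_k)^m u_r^{-1}\cdots u_2^{-1}$, never appealing to amalgams or to CT/CSA properties. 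As it stands, your proposal is a correct reduction plus an unproven core claim, not a proof.
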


\begin{proof}
Let $w=w_1\ldots w_k$. We apply induction on the height of $w$. The assertion is clear, if $h(w)=1$. So, let $h=h(w)>1$. We have $w=u^{\frac{1}{m}}$,
where $m\geq 2$, and $h(u)=h-1$. Note that one can write $u=u_1\ldots u_r$, such that any $u_i$ is pure radical of height at most $h-1$. Now,
$$
u_1=(w_1\ldots w_k)^mu_r^{-1}\ldots u_2^{-1},
$$
and if $w_k=w_1^{-1}$, then
$$
u_1=w_1\ldots w_{k-1}w_2\ldots w_{k-1}\ldots w_ku_r^{-1}\ldots u_2^{-1}.
$$
So, after cancelation, still any $w_i$ appears in the right hand side. If $w_{k-1}=w_2^{-1}$, then again after cancelation, any $w_i$
appears in the right hand side. This argument shows that in the case $w_k\neq u_r$, the assertion follows from the induction. Therefore,
suppose $w_k=u_r$. Then
$$
u_1\ldots u_{r-1}w_k=(w_1\ldots w_k)^{m-1}w_1\ldots w_k,
$$
so
$$
u_1\ldots u_{r-1}=(w_1\ldots w_k)^{m-1}w_1\ldots w_{k-1}.
$$
Note that, it is impossible to delete every $u_i$, since there is at least one $w_i$ of height $h$, while the height of every $u_i$ is at
most $h-1$. Hence, there are $i$ and $j$ such that
$$
u_1\ldots u_i=(w_1\ldots w_k)^{m-1}w_1\ldots w_j.
$$
Again, note that there is no $s>1$, with the property $u_s\ldots u_r=w_1\ldots w_k$, since otherwise, $u_s\ldots u_r=w$, a contradiction.
Hence, we have
$$
u_1=(w_1\ldots w_k)^{m-1}w_1\ldots w_ju_i^{-1}\ldots u_2^{-1},
$$
so, we may apply induction, to get the assertion.
\end{proof}

\begin{corollary}
The reduced form of an arbitrary element in $F_{\mathbb{Q}}(X)$ is unique.
\end{corollary}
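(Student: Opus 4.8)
The plan is to prove uniqueness by induction on the height $h(w)$, using the preceding lemma to control how pure radicals can combine. First I would dispose of the base case, where $w$ lies in the bottom layer $\langle A_0\rangle=F(X)$: here every factor of a reduced form is a pure radical of height one, hence an element of $B_0=A_0=X^{\pm}$, so a reduced form is nothing but a reduced word in the ordinary free group $F(X)$, and uniqueness is the classical freeness of $F(X)$. For the inductive step I would fix $w$ with $h(w)>1$ together with two reduced forms $w=w_1\cdots w_k=w_1'\cdots w_{k'}'$; by the observation recorded after the definition of reduced forms every factor has height at most $h(w)$, and each factor that is a pure radical of height strictly below $h(w)$ already has a unique reduced form by the induction hypothesis. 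Thus the whole problem reduces to showing that the \emph{sequence} of pure-radical factors is determined by $w$.

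The key reduction is the case in which $w$ itself is a pure radical, say $w\in B_n$. I would argue that then every reduced form of $w$ has length one. Indeed, in any reduced form the factors $w_i$ lie in $B$, satisfy $w_iw_{i+1}\neq1$, and have product $w\in B$; so the preceding lemma yields a single $u\in B$ and rationals $\alpha_i$ with $w_i=u^{\alpha_i}$. These factors commute, so $w=u^{\alpha_1+\cdots+\alpha_k}$ is a single power of $u$; since $w$ also admits the length-one reduced form $w=(w^m)^{1/m}$ with $w^m\in\langle A_{n-1}\rangle$ (taking $m$ least with this property and $w^m$ reduced by induction), the minimality of $k$ forces $k=1$. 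It then remains to see that the representation $w=u^{1/m}$ is itself unique: the exponent $m$ is pinned down as the least positive integer with $w^m\in\langle A_{n-1}\rangle$ — equivalently, $u=w^m$ is not a proper power in $\langle A_{n-1}\rangle$ — and this is exactly where the unique-root property of $F_{\mathbb{Q}}(X)$ enters, since $w^m=s^m$ with $s\in\langle A_{n-1}\rangle$ would force $s=w\notin\langle A_{n-1}\rangle$. Once $m$ is fixed, $u=w^m$ is determined and has a unique reduced form by the induction hypothesis.

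With the pure-radical case in hand I would treat a general $w\notin B$, where both reduced forms necessarily have length at least two (a length-one reduced form is a single reduced pure radical, hence lies in $B$). The strategy is the usual one for normal forms: show that the leading factors agree, $w_1=w_1'$, and then cancel $w_1$ and induct on the length $k$, using that any consecutive sub-product of a reduced form is again reduced by minimality. To compare $w_1$ and $w_1'$ I would rewrite the identity as $w_1^{-1}w_1'=w_2\cdots w_k(w_2'\cdots w_{k'}')^{-1}$ and examine the leftmost pure radical: the only way two distinct leading pure radicals can be reconciled is if they are common powers of one pure radical, which is precisely the situation governed by the preceding lemma, and unique roots then force $w_1=w_1'$.

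The step I expect to be the main obstacle is exactly this comparison of the leading factors in the general product case: ruling out ``partial overlaps'' in which $w_1$ and $w_1'$ share a proper piece without being equal. Controlling this requires combining the preceding lemma — which forbids a genuine product of pure radicals from collapsing to a single pure radical unless they are common powers — with the unique-root property and the height induction. By contrast, the pure-radical case and the freeness of $F(X)$ are comparatively routine once this comparison is in place.
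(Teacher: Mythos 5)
Your plan stalls exactly where you predict it will, and the step you postpone is the entire content of the corollary, so the proposal has a genuine gap. In the general case you reduce everything to showing $w_1=w_1'$, and your proposed tool is the identity $w_1^{-1}w_1'=w_2\cdots w_k(w_2'\cdots w_{k'}')^{-1}$ together with the preceding lemma. But that lemma applies only to a product of pure radicals whose \emph{total product again lies in $B$}, and neither side of this identity is known to be a pure radical; so this is not ``precisely the situation governed by the preceding lemma'', and the phrase ``unique roots then force $w_1=w_1'$'' has no argument behind it. The paper resolves the obstacle by a maneuver you are missing: do not compare leading factors at all, but solve for the \emph{last} factor, $w_k=w_{k-1}^{-1}\cdots w_1^{-1}w_1'\cdots w_k'$. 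The left-hand side is a pure radical, and in the product on the right no adjacent pair cancels ($w_i^{-1}w_{i-1}^{-1}\neq 1$ and $w_i'w_{i+1}'\neq 1$ because both forms are reduced, while $w_1^{-1}w_1'\neq 1$ is exactly the assumption $w_1\neq w_1'$), so the lemma applies directly and yields a single $u\in B$ such that all of $w_1,\dots,w_{k-1},w_1',\dots,w_k'$ are rational powers of $u$. Consequently $w=w_1'\cdots w_k'$ is a rational power of the single pure radical $u$, which contradicts the minimality of $k$ when $k>1$. With this trick the entire proof is one induction on the length $k$ (if $w_1=w_1'$, cancel and induct; otherwise derive the contradiction above), and your height induction, separate pure-radical case, and free-group base case all become unnecessary.

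Two further problems, minor by comparison. Your base case is itself unproven: you assert that a reduced form of an element of $\langle A_0\rangle=F(X)$ can only use factors from $B_0=X^{\pm}$, but nothing in the definition of reduced forms forbids a minimal factorization of an element of $F(X)$ from involving pure radicals of higher height, and excluding this requires essentially the same lemma-based argument you have not yet set up — so the base case is not simply ``classical freeness of $F(X)$''. Also, you allow the two reduced forms to have different lengths $k$ and $k'$; since a reduced form is by definition of minimal length, $k=k'$ automatically, and this equality is what the final contradiction (a factorization of length $1<k$) exploits. On the positive side, your pure-radical case is a correct application of the lemma, because there the product of the factors is $w\in B$ and the hypotheses genuinely hold; but, as the paper's argument shows, that case needs no separate treatment.
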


\begin{proof}
Suppose $w\in F_{\mathbb{Q}}(X)$ and
$$
w=w_1w_2\ldots w_k=w^{\prime}_1w^{\prime}_2\ldots w^{\prime}_k,
$$
are two reduced forms of $w$. We use induction on $k$. The case $k=1$ is trivial. So, let $k>1$. We have
$$
w_k=w_{k-1}^{-1}\ldots w_1^{-1}w^{\prime}_1w^{\prime}_2\ldots w^{\prime}_k.
$$
If $w_1=w^{\prime}_1$, then we may apply the induction to complete the proof. So, suppose $w_1\neq w^{\prime}_1$. Therefore, we have
$$
w_i^{-1}w_{i-1}^{-1}\neq 1, w_1^{-1}w^{\prime}_1\neq 1, w^{\prime}_i w^{\prime}_{i+1}\neq 1,
$$
so, we using the previous lemma, we conclude that there is a $u\in B$ and there are rational numbers
$$
\alpha_1, \ldots, \alpha_{k-1}, \beta_1, \ldots, \beta_k
$$
such that
$$
w_1=u^{\alpha_1}, \ldots, w_{k-1}=u^{\alpha_{k-1}}\ \ and\ \ w^{\prime}_1=u^{\beta_1}, \ldots, w^{\prime}_k=u^{\beta_k}.
$$
This contradicts the minimality of $k$.
\end{proof}

Now, we are ready to associate a colored rational complex to every
rational exponential group. Let $G$ be such a group. There is a set
$X$ and a normal $\mathbb{Q}$-subgroup $K$ in $F_{\mathbb{Q}}(X)$,
such that $G\cong F_{\mathbb{Q}}(X)/K$. We define a colored rational
complex $C(X, K)=(C, \Phi, v)$ as follows:\\

1- $C$ has a unique vertex $v$.

2- For any pure radical $w\in B$, there is a corresponding edge $e(w)$ in $C$.

3- If $p=e(w_1)e(w_2)\ldots e(w_k)$ and $\alpha\in \mathbb{Q}^+$, and if $w^{\prime}_1w^{\prime}_2\ldots w^{\prime}_r$ is the reduced form of
$(w_1w_2\ldots w_k)^{\alpha}$, then we define
$$
p^{(\alpha)}=e(w^{\prime}_1)e(w^{\prime}_2)\ldots e(w^{\prime}_r).
$$

4- If $p=e(w_1)e(w_2)\ldots e(w_k)$ and $w^{\prime}$ is the  reduced form of $w_1\ldots w_k$, then the height of $p$ is equal to $h(w^{\prime})$.

5- We have
$$
\Phi=\{ e(w_1)e(w_2)\ldots e(w_k):\ w_1w_2\ldots w_k\in K\}.
$$

In the sequel, we show that $C(X, K)$ is really a colored rational complex and further its fundamental group is isomorphic to $G$.
From now on, we use the notation $\partial(w)$ for the cycle $e(w_1)\ldots e(w_k)$, where $w_1\ldots w_k$ is the reduced form of $w$.
In this notation, we have
$$
(e(w_1)\ldots e(w_k))^{(\alpha)}=\partial((w_1\ldots w_k)^{\alpha}),
$$
and
$$
\Phi=\{ \partial(w):\ w\in K\}.
$$
The cancelation operations on $C(X,K)$ can  be restated as follows:\\

1- Deleting any part of the form $e(w)e(w^{-1})$, with $w\in B$.

2- Replacing parts of the form $\partial(w^{\alpha})\partial(w^{\beta})$, with $\partial(w^{\alpha+\beta})$.

3- Deleting any part of the form $\partial(w)$, with $w\in K$.\\

\begin{lemma}
Let $w_1, \ldots, w_k, w^{\prime}_1, \ldots, w^{\prime}_l\in B$ and
$$
w_1\ldots w_k=w^{\prime}_1\ldots w^{\prime}_l.
$$
Then we have
$$
e(w_1)\ldots e(w_k)\simeq e(w^{\prime}_1)\ldots e(w^{\prime}_l).
$$
\end{lemma}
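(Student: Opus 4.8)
The plan is to reduce everything to a single normalizing statement: every edge-path of the form $e(w_1)\cdots e(w_k)$ with $w_i\in B$ is homotopic under $\simeq$ (that is, using only the cancellations of types $1$ and $2$, since the statement does not involve $\simeq_\ast$) to the canonical path $\partial(w)$ attached to the reduced form of the group element $w=w_1\cdots w_k\in F_{\mathbb Q}(X)$. Granting this normalization the lemma is immediate: since $w_1\cdots w_k=w'_1\cdots w'_l$ is a \emph{single} element of $F_{\mathbb Q}(X)$, it has exactly one reduced form by Corollary 4.4, hence a single path $\partial(w)$, and therefore $e(w_1)\cdots e(w_k)\simeq\partial(w)\simeq e(w'_1)\cdots e(w'_l)$.

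Thus the whole content lies in the normalization $e(w_1)\cdots e(w_k)\simeq\partial(w_1\cdots w_k)$, which I would prove by induction on $k$. For $k=1$ the claim is $e(w)\simeq\partial(w)$ for a single pure radical $w$; this is trivial when $w$ is already reduced, since then $\partial(w)=e(w)$, and when $w=u^{1/m}$ is not reduced I would rewrite it with type-$2$ cancellations, running a secondary induction on the height $h(w)$ to normalize the interior $u$. For $k>1$ I set $a=w_1\cdots w_{k-1}$ and $b=w_k$; the outer induction gives $e(w_1)\cdots e(w_{k-1})\simeq\partial(a)$ and the case $k=1$ gives $e(w_k)\simeq\partial(b)$, so the problem collapses to the \emph{concatenation} statement $\partial(a)\partial(b)\simeq\partial(ab)$.

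The concatenation step is the heart of the matter, and I would prove it by induction on $h(ab)$, the two tools being Lemma 4.3 and the uniqueness of reduced forms. Writing the reduced forms $a=a_1\cdots a_p$ and $b=b_1\cdots b_q$, the concatenated path is exactly $e(a_1)\cdots e(a_p)e(b_1)\cdots e(b_q)$, which must be folded onto $\partial(ab)$. First one deletes any adjacent inverse pair $e(a_p)e(b_1)$ by a type-$1$ cancellation; the remaining interaction at the seam is where consecutive factors combine into a root of larger height. Here Lemma 4.3 is the decisive instrument: whenever a consecutive sub-block of pure radicals forms an element that is itself a single pure radical, that lemma forces all factors of the block to be powers $u^{\alpha_i}$ of one common base $u\in B$, and then successive type-$2$ cancellations collapse $e(u^{\alpha_1})\cdots e(u^{\alpha_t})$ to $e\bigl(u^{\sum_i\alpha_i}\bigr)$, matching the corresponding factor of $\partial(ab)$. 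The lower-height blocks produced by the reduced-form decomposition of $ab$ then fall under the inductive hypothesis.

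The main obstacle is exactly this seam analysis, and the example $(xy)(y^{-1}x)^{1/2}(xy)^{-1}\simeq(x^2y^{-1}x^{-1})^{1/2}$ shows why: the two outer height-one blocks conjugate the inner radical, so recognizing that the whole product is a single height-two radical, reachable by type-$1$ and type-$2$ moves alone, requires correctly partitioning the given factorization in accordance with the unique reduced form of $ab$ and then invoking Lemma 4.3 block by block. Verifying that such a compatible partition always exists, and that no operation beyond types $1$ and $2$ is ever needed — i.e. that the height-decreasing structure of the reduced form is faithfully mirrored by the cancellation calculus on edge-paths — is the delicate bookkeeping that the argument must carry out.
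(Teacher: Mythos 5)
Your opening reductions are sound: by Corollary 4.4 the lemma is indeed equivalent to the normalization statement $e(w_1)\cdots e(w_k)\simeq\partial(w_1\cdots w_k)$, and by induction on $k$ this is in turn equivalent to the concatenation statement $\partial(a)\partial(b)\simeq\partial(ab)$. But notice that these reductions do not lower the difficulty at all: the concatenation statement is itself an instance of the lemma (compare the factorization of $ab$ given by concatenating the reduced forms of $a$ and $b$ with the reduced form of $ab$), so after your first two paragraphs you have only restated what is to be proved. The entire content of the lemma lies in what you call the seam analysis, and that step is never carried out: you describe a plan (partition the concatenated word into consecutive blocks matching the factors of $\partial(ab)$, apply Lemma 4.3 to each block, collapse by type-2 moves) and then explicitly concede that verifying that such a compatible partition always exists, and that moves of types 1 and 2 suffice, is ``the delicate bookkeeping that the argument must carry out.'' That bookkeeping \emph{is} the theorem; omitting it is a genuine gap. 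Moreover, nothing in the sketch shows that the factors of the reduced form of $ab$ are products of \emph{consecutive} factors of the concatenated word, nor how the type-2 collapses are to be threaded across block boundaries, so it is not clear the plan can be completed as described.

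The paper's proof avoids all of this with one maneuver that is missing from your proposal. After the harmless normalizations $w_iw_{i+1}\neq 1$ and $w'_jw'_{j+1}\neq 1$ (otherwise perform a type-1 cancellation and induct on total length) and $w_1\neq w'_1$ (otherwise strip the common first factor and induct), rewrite the hypothesis as
$$
w_k=w_{k-1}^{-1}\cdots w_1^{-1}w'_1\cdots w'_l .
$$
The right-hand side is now a product of elements of $B$ with no adjacent trivial subproducts, whose total product lies in $B$; hence Lemma 4.3 applies \emph{once, to the whole word}, producing a single $u\in B$ and rationals with $w_i=u^{\alpha_i}$ for $i<k$ and $w'_j=u^{\beta_j}$, so that also $w_k=u^{\beta-\alpha}$ with $\alpha=\sum_i\alpha_i$ and $\beta=\sum_j\beta_j$. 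Both paths then collapse to $\partial(u^{\beta})$ by type-2 cancellations alone: there is no induction on height, no partition, and no seam to analyze, because the one-shot application of Lemma 4.3 makes every factor on \emph{both} sides a power of the same base simultaneously. In short, your reduction framework is fine, but the proof of its crucial step is absent, and the key idea that makes that step immediate---expressing one pure radical as the product of all remaining factors so that Lemma 4.3 applies globally rather than block by block---does not appear in your proposal.
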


\begin{proof}
We may assume that $w_iw_{i+1}\neq 1$, $w^{\prime}_iw^{\prime}_{i+1}\neq 1$ and also $w_1\neq w^{\prime}_1$. We have
$$
w_k=w_{k-1}^{-1}\ldots w_1^{-1}w^{\prime}_1\ldots w^{\prime}_l,
$$
so by  Lemma 4.3, there is $u\in B$ and there are rational numbers
$$
\alpha_1, \ldots, \alpha_{k-1}, \beta_1, \ldots, \beta_l,
$$
such that
$$
w_1=u^{\alpha_1}, \ldots, w_{k-1}=u^{\alpha_{k-1}}, w^{\prime}_1=u^{\beta_1}, \ldots, w^{\prime}_l=u^{\beta_l}.
$$
Since, $u$ is a pure radicals, we have
$$
\partial(u^{\alpha_i})=e(u^{\alpha_i}), \ \partial(u^{\beta_i})=e(u^{\beta_i}).
$$
Let $\alpha=\sum \alpha_i$ and $\beta=\sum \beta_i$. We have
\begin{eqnarray*}
e(w_1)\ldots e(w_k)&=& e(u^{\alpha_1})\ldots e(u^{\alpha_{k-1}})e(w_k)\\
                   &\simeq&\partial(u^{\alpha})\partial(w_{k-1}^{-1}\ldots w_1^{-1}w^{\prime}_1\ldots w^{\prime}_l)\\
                   &=&\partial(u^{\alpha})\partial(u^{-\alpha+\beta})\\
                   &\simeq& \partial(u^{\alpha})\partial(u^{-\alpha})\partial(u^{\beta})\\
                   &\simeq& \partial(u^{\beta})\\
                   &\simeq&\partial(u^{\beta_1})\ldots \partial(u^{\beta_l})\\
                   &=&e(w^{\prime}_1)\ldots e(w^{\prime}_l).
\end{eqnarray*}
\end{proof}

\begin{lemma}
Suppose $w_1, \ldots, w_k\in B$. Then\\
1- $e(w_1)\ldots e(w_k)\simeq 1$ implies $w_1\ldots w_k=1$.\\
2- $e(w_1)\ldots e(w_k)\simeq_{\ast} 1$ implies $w_1\ldots w_k\in K$.
\end{lemma}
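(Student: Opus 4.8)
The plan is to introduce a single evaluation map from cycles of $C(X,K)$ to the group $F_{\mathbb{Q}}(X)$ and to show that it is invariant under each type of cancellation, exactly for the first two types and modulo $K$ for the third. Concretely, I would set
$$
\Psi(e(w_1)e(w_2)\ldots e(w_k)) = w_1 w_2\ldots w_k \in F_{\mathbb{Q}}(X).
$$
Since distinct pure radicals index distinct edges, every cycle determines a unique sequence $w_1,\ldots,w_k\in B$, so $\Psi$ is a well-defined map on cycles, with $\Psi(1)=1$ for the trivial cycle, and it is multiplicative with respect to concatenation of edge-paths. The one identity I would record at the outset is that $\Psi(\partial(v))=v$ for every $v\in F_{\mathbb{Q}}(X)$: indeed $\partial(v)=e(v_1)\ldots e(v_r)$ with $v_1\ldots v_r$ the reduced form of $v$, and the reduced form multiplies back to $v$.

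For part 1, I would verify that $\Psi$ is unchanged by cancellations of the first and second types, and then conclude by induction on the number of cancellations. A cancellation acts inside a cycle $p=\sigma\, c\,\tau$ by replacing a contiguous block $c$ with a block $c'$; by multiplicativity it suffices to check $\Psi(c)=\Psi(c')$. For a type-1 cancellation $c=e(w)e(w^{-1})$ and $c'$ is empty, so $\Psi(c)=ww^{-1}=1=\Psi(c')$. For a type-2 cancellation $c=\partial(w^{\alpha})\partial(w^{\beta})$ and $c'=\partial(w^{\alpha+\beta})$; by the identity above $\Psi(c)=w^{\alpha}w^{\beta}$ and $\Psi(c')=w^{\alpha+\beta}$, which agree by the exponentiation axiom $g^{\alpha+\beta}=g^{\alpha}g^{\beta}$. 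Hence $\Psi$ is constant on $\simeq$-classes, and $e(w_1)\ldots e(w_k)\simeq 1$ forces $w_1\ldots w_k=\Psi(1)=1$.

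For part 2, I would compose $\Psi$ with the quotient map $\pi\colon F_{\mathbb{Q}}(X)\to G=F_{\mathbb{Q}}(X)/K$ and show that $\pi\circ\Psi$ is invariant under all three cancellation types. Types 1 and 2 are handled exactly as before. For a type-3 cancellation we delete a block $c=\partial(w)$ with $w\in K$, so $\Psi(c)=w$; writing the cycle as $p=\sigma\,\partial(w)\,\tau$ gives
$$
\Psi(p)=\Psi(\sigma)\,w\,\Psi(\tau)=\bigl(\Psi(\sigma)\,w\,\Psi(\sigma)^{-1}\bigr)\,\Psi(\sigma)\Psi(\tau).
$$
Because $K$ is a normal subgroup, the conjugate $\Psi(\sigma)\,w\,\Psi(\sigma)^{-1}$ again lies in $K$, so $\Psi(p)$ and $\Psi(\sigma\tau)$ lie in the same coset of $K$. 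Thus $\pi\circ\Psi$ is constant on $\simeq_{\ast}$-classes, and $e(w_1)\ldots e(w_k)\simeq_{\ast}1$ forces $w_1\ldots w_k\in K$.

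The two parts are parallel and short once $\Psi$ is in place, and both reduce to the bookkeeping identity $\Psi(\partial(v))=v$ together with the exponentiation axiom for the second cancellation type. The only step requiring genuine care is the type-3 case in part 2: deleting an \emph{interior} block produces the conjugating factor $\Psi(\sigma)$, and one must invoke the \emph{normality} of $K$ (not merely $w\in K$) to keep the value in the correct coset. I expect this conjugation to be the main obstacle, and it is precisely why $K$ is required to be a normal $\mathbb{Q}$-subgroup.
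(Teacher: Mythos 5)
Your proof is correct, but for part 2 it takes a genuinely different route from the paper's. For part 1 the two arguments coincide in substance: each type-1 or type-2 cancellation leaves the product $w_1\cdots w_k$ literally unchanged in $F_{\mathbb{Q}}(X)$; the paper states this informally, while you package it as invariance of an evaluation map $\Psi$. For part 2, however, the paper argues by induction on $k$: it locates a deleted part $\partial(w)\in\Phi$ inside the written cycle, splits the cycle into that block and its complement (both $\simeq_{\ast}1$ and both shorter), applies the inductive hypothesis to each, and recombines them using normality of $K$; the degenerate case where $\partial(w)$ is the entire cycle is settled by injectivity of $w\mapsto e(w)$ together with uniqueness of reduced forms. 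You instead verify that the single invariant $\pi\circ\Psi$ is preserved by every elementary move, the type-3 move being absorbed by the conjugate $\Psi(\sigma)\,w\,\Psi(\sigma)^{-1}\in K$. What your version buys is uniformity: since the invariant is checked one cancellation at a time, it is indifferent to how the three types of moves are interleaved, whereas the paper's induction implicitly treats the situation where the $\Phi$-part to be deleted occurs in the cycle as originally written --- in general a type-3 move may become available only after earlier type-1/2 moves have reshaped the cycle, and type-2 moves need not shorten it, so the induction parameter requires care at exactly that point. Both proofs invoke normality of $K$ in the same place and for the same reason, and both depend on Corollary 4.4 only through the well-definedness of $\partial$ and of the action $p^{(\alpha)}$.
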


\begin{proof}
Every cancelation of the first type, corresponds to a cancelation of the form $w_iw_i^{-1}$. Similarly, a cancelation of the second type,
corresponds to a trivial replacement of the form $w^{\alpha+\beta}\leftrightarrow w^{\alpha}w^{\beta}$. This observation proves 1.
To prove 2, we use induction on $k$. The case $k=0$ is trivial, so let $k>0$. Let
$$
e(w_1)\ldots e(w_k)\simeq_{\ast} 1.
$$
By the part 1, it is enough to consider cancelations of type
$$
\partial(w), \ w\in K.
$$
Suppose
$$
e(w_1)\ldots e(w_k)=e(w_1)\ldots e(w_i)\partial(w)e(w_j)\ldots e(w_k),
$$
where $w\in K$ and
$$
\partial(w)=e(w_{i+1})\ldots e(w_{j-1}).
$$
So, we have the next two relations:

i- \ $e(w_1)\ldots e(w_i)e(w_j)\ldots e(w_k)\simeq_{\ast} 1$.

ii- $e(w_{i+1})\ldots e(w_{j-1})\simeq_{\ast} 1$.\\
If $i\neq 0$ or $j-1\neq k$, then the inductive hypothesis can be applied to get
$$
w_1\ldots w_iw_j\ldots w_k,\ \  w_{i+1}\ldots w_{j-1}\in K.
$$
Since $K$ is normal,  we  have $w_1\ldots w_k\in K$. If $i=0$ and $j-1=k$, then we have
$$
\partial(w)=e(w_1)\ldots e(w_k).
$$
Let $w=w^{\prime}_1\ldots w^{\prime}_l$ be the reduced form of $w$. So, we have
$$
e(w^{\prime}_1)\ldots e(w^{\prime}_l)=\partial(w)=e(w_1)\ldots e(w_k),
$$
which implies that $k=l$ and $w_i=w^{\prime}_i$, for any $i$. Hence
$$
w_1\ldots w_k=w^{\prime}_1\ldots w^{\prime}_l=w\in K.
$$
\end{proof}

\begin{corollary}
We have $\partial(w)\simeq_{\ast} 1$ if and only if $w\in K$.
\end{corollary}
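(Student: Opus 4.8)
The plan is to deduce this corollary almost immediately from Lemma 4.6, since the two implications correspond exactly to the two halves of that lemma once the definitions of $\partial$ and $\Phi$ are unwound.

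First I would handle the \emph{if} direction. Suppose $w\in K$. By the very definition of the coloring set,
$$
\Phi=\{\partial(w):\ w\in K\},
$$
so $\partial(w)\in\Phi$. A cancelation of the third type permits deleting any part of a cycle that lies in $\Phi$; applying it to the whole of $\partial(w)$ removes it and leaves the trivial cycle. Hence $\partial(w)\simeq_{\ast}1$. This direction uses nothing beyond the definition of colored homotopy.

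For the \emph{only if} direction I would pass to the reduced form. Write the reduced form of $w$ as $w=w_1\ldots w_k$ with each $w_i\in B$, so that by definition $\partial(w)=e(w_1)\ldots e(w_k)$. Assuming $\partial(w)\simeq_{\ast}1$, that is $e(w_1)\ldots e(w_k)\simeq_{\ast}1$, part 2 of Lemma 4.6 yields $w_1\ldots w_k\in K$. Since $w_1\ldots w_k$ is precisely $w$, we conclude $w\in K$, as required.

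The only genuine content is therefore already carried by Lemma 4.6; the corollary is just its reformulation in the $\partial$/$\Phi$ notation, so I do not expect a real obstacle. The single point deserving care is the implicit appeal to uniqueness of the reduced form (Corollary 4.4): this is what guarantees that $\partial(w)$ is well defined and, crucially, that the product $w_1\ldots w_k$ recovered from the reduced syllables is literally $w$ rather than merely an element equal to it, so that membership of $w_1\ldots w_k$ in $K$ transfers verbatim to $w$.
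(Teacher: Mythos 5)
Your proof is correct and follows essentially the same route as the paper's: the \emph{if} direction by a single cancelation of the third type (which the paper dismisses as obvious), and the \emph{only if} direction by passing to the reduced form $w=w_1\ldots w_k$ and applying part 2 of Lemma 4.6. Your added remark on the role of uniqueness of reduced forms (Corollary 4.4) in making $\partial(w)$ well defined is a sensible clarification but does not change the argument.
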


\begin{proof}
If $w\in K$, then obviously $\partial(w)\simeq_{\ast}1$. To prove the converse, suppose $w=w_1\ldots w_k$ is the reduced form of $w$.
If $\partial(w)\simeq_{\ast}1$, then $e(w_1)\ldots e(w_k)\simeq_{\ast} 1$, and so by the above lemma
$$
w=w_1\ldots w_k\in K.
$$
\end{proof}

Now, we are ready to prove the main theorem of this section.

\begin{theorem}
$C(X, K)$ is a colored rational complex and its fundamental group is isomorphic to $G$.
\end{theorem}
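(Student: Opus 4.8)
The plan is to build a dictionary between the cycles of $C$ and the elements of $F_{\mathbb{Q}}(X)$, and then read off every axiom of a (colored) rational complex, as well as the final isomorphism, from the corresponding fact about $F_{\mathbb{Q}}(X)$. Concretely, I would define
$$
\psi: C(v)\to F_{\mathbb{Q}}(X), \qquad \psi(e(w_1)\ldots e(w_k))=w_1\ldots w_k,
$$
which is well defined on cycles since each edge is $e(w)$ for a unique pure radical $w\in B$ and $e(w)^{-1}=e(w^{-1})$ with $w^{-1}\in B$; it is surjective because every element of $F_{\mathbb{Q}}(X)$ has a reduced form $w_1\ldots w_k$ with $w_i\in B$. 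The central first step is to show that $\psi$ detects homotopy. Combining Lemma 4.5 (equal products give homotopic cycles) with Lemma 4.6(1) (a cycle homotopic to $1$ has trivial product) and applying them to $pq^{-1}$ yields $p\simeq q$ if and only if $\psi(p)=\psi(q)$; combining Corollary 4.7 with Lemma 4.6(2) yields $p\simeq_{\ast}q$ if and only if $\psi(p)K=\psi(q)K$. Thus $\psi$ descends to a bijection of $\pi_{\mathbb{Q}}(C,v)$ with $F_{\mathbb{Q}}(X)$ and of $\pi_{\mathbb{Q}}(C,\Phi,v)$ with $F_{\mathbb{Q}}(X)/K$. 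Moreover, directly from the definition of $C(X,K)$ one has $\psi(pq)=\psi(p)\psi(q)$, $\psi(p^{(\alpha)})=\psi(p)^{\alpha}$ for $\alpha\in\mathbb{Q}^{+}$ (item 3), and $h(p)=h(\psi(p))$ (item 4), so $\psi$ intertwines concatenation, the $\mathbb{Q}^{+}$-action and the height of $C$ with the multiplication, the $\mathbb{Q}$-action and the height filtration of $F_{\mathbb{Q}}(X)$.

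With this dictionary, I would verify that $C$ is a rational complex by transporting each requirement to $F_{\mathbb{Q}}(X)$. The three compatibility conditions of Definition 2.2 become, under $\psi$, the statements $g_1^{m}=g_2^{m}\Rightarrow g_1=g_2$, $(g^{-1})^{\alpha}=(g^{\alpha})^{-1}$, and $(aga^{-1})^{\alpha}=ag^{\alpha}a^{-1}$, all of which hold because $F_{\mathbb{Q}}(X)$ is a rational exponential group (unique roots together with axioms 3 and 4). For the height function of Definition 2.3 I would check the six conditions in turn: condition 1 uses $h(g)=0$ iff $g=1$ together with the equivalence $p\simeq q\iff\psi(p)=\psi(q)$; conditions 2 and 3 record the behaviour of $h$ under root extraction and products, which is exactly what the filtration $A_0\subseteq A_1\subseteq\cdots$ of Lemma 4.1 and the notion of pure radical encode. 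The atomic level $\min C$ corresponds to height $1$, that is, to the atomic edges $e(x^{\pm 1})$ with $x\in X$.

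The main obstacle is condition 4 of Definition 2.3 (and, relatedly, the atomic conditions 5 and 6), where both existence and uniqueness of the lower-height decomposition are needed. For existence I would use that if $h(\psi(p))>1$, then the reduced form $\psi(p)=w_1\ldots w_r$ has each $w_i=u_i^{1/m_i}$ a pure radical with $h(u_i)<h(w_i)\le h(\psi(p))$; setting $q_i=\partial(u_i)$ gives $p\simeq q_1^{(1/m_1)}\ldots q_r^{(1/m_r)}$ with $h(q_i)<h(p)$. Uniqueness of the minimal such expression is precisely the uniqueness of the reduced form, Corollary 4.4, read through $\psi$. For condition 6, two atomic reduced cycles are homotopic iff their $\psi$-images (which lie in $X^{\pm}$) coincide, whence the cycles are equal; and condition 5 follows because atomic edges correspond to the free generators $X^{\pm}$, so a cyclically reduced product of them maps to a cyclically reduced word of the ordinary free group, whose powers stay reduced. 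This is where I expect to spend the most care, since one must align the combinatorial notions of reduced and cyclically reduced in $C$ with the algebraic reduced form in $F_{\mathbb{Q}}(X)$, keeping exact track of heights.

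Finally, the colored condition and the isomorphism are quick. The strengthened cancellation $p^{m}\simeq_{\ast}q^{m}\Rightarrow p\simeq_{\ast}q$ follows from $\psi(p)^{m}K=\psi(q)^{m}K$ and the defining property of a normal $\mathbb{Q}$-subgroup, namely $a^{m}K=b^{m}K\Rightarrow aK=bK$; this is exactly why $K$ was required to be a normal $\mathbb{Q}$-subgroup. Hence $(C,\Phi,v)$ is a colored rational complex. The descended bijection $\overline{\psi}\colon\pi_{\mathbb{Q}}(C,\Phi,v)\to F_{\mathbb{Q}}(X)/K$ respects products and the $\mathbb{Q}$-action, so it is an isomorphism of rational exponential groups; composing with $F_{\mathbb{Q}}(X)/K\cong G$ completes the proof.
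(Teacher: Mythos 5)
Your proposal is correct and takes essentially the same route as the paper: your dictionary $\psi$ is just the inverse viewpoint of the paper's map $\partial$ (so your descended bijection $\overline{\psi}$ is exactly the paper's isomorphism $\varphi([\partial(w)]_{\ast})=wK$), and you obtain the key cancellation axiom $p^{m}\simeq_{\ast}q^{m}\Rightarrow p\simeq_{\ast}q$ from Corollary 4.7 together with the defining property of a normal $\mathbb{Q}$-subgroup, which is precisely the paper's argument. The only difference is one of detail, not of method: you sketch the height-function and compatibility verifications (via Lemmas 4.3, 4.5, 4.6 and Corollary 4.4) that the paper dismisses as routine.
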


\begin{proof}
The requirements of the height function can be checked routinely. We must prove the implication
$$
p_1^m\simeq_{\ast} p_2^m \Rightarrow p_1\simeq_{\ast} p_2,
$$
or equivalently the implication
$$
\partial(w_1^m)\simeq_{\ast} \partial(w_2^m) \Rightarrow \partial(w_1)\simeq_{\ast} \partial(w_2).
$$
We have $\partial(w_1^{-m}w_2^m)\simeq_{\ast} 1$, so by the above corollary, $w_1^{-m}w_2^m\in K$. This shows that
$$
(w_1K)^m=(w_2K)^m,
$$
and since, $F_{\mathbb{Q}}(X)/K$ is a rational group, so $w_1K=w_2K$. Therefore $w_1^{-1}w_2\in K$,
and hence $\partial(w_1)\simeq_{\ast}\partial(w_2)$. This proves that $C(X,K)$ is a colored rational complex.
To prove the second assertion, define a map
$$
\varphi: \pi_{\mathbb{Q}}(C, \Phi, v)\to G,
$$
by $\varphi([\partial(w)]_{\ast})=wK$. Now, it can be easily verified that $\varphi$ is a well-defined isomorphism between
the fundamental group of $C(X, K)=(C, \Phi, v)$ and $G=F_{\mathbb{Q}}(X)/K$.
\end{proof}


\section{Covering complex}
In this section, we introduce {\em covering complexes} to study rational subgroups of $F_{\mathbb{Q}}(X)$. Our main aim is to prove that
the variety of rational exponential groups is a Schreier variety. The idea behind our method is the same as in the classical proof of
Nielsen-Schreier's theorem by means of 1-dimensional complexes \cite{Lyndon-Shupp}.

\begin{definition}
Let $(C, v)$ be a rational bouquet. Let $C^{\prime}$ be an arbitrary rational complex. By $\Sigma$ we denote the set of all vertices,
edges and cycles of $C^{\prime}$. Suppose a surjective  map $f:\Sigma\to C(v)$ is given in such a way that\\

1- for any vertex $v^{\prime}$, we have $f(v^{\prime})=v$.

2- if $e^{\prime}$ is an edge, then $f(e^{\prime})$ is also an edge.

3- we have
$$
f(e^{\prime}_1e^{\prime}_2\ldots e^{\prime}_k)=f(e^{\prime}_1)f(e^{\prime}_2)\ldots f(e^{\prime}_k).
$$

4- for any vertex $v^{\prime}\in C^{\prime}$ and $p\in C^{\prime}(v^{\prime})$, the equality
$$
f(p^{(\alpha)})=(f(p))^{(\alpha)}
$$
holds for any $\alpha\in \mathbb{Q}^+$.

5- $f$ is locally one-one, i.e. it is injective on the neighborhood of any vertex.\\
Then we say that $C^{\prime}$ is a {\em covering complex} for $(C, v)$ and $f:C^{\prime}\to C$ is called a {\em covering map}.
\end{definition}

\begin{theorem}
Let $f: C^{\prime}\to (C, v)$ be a covering map. Then for any $v^{\prime}$, there is an induced embedding
$$
f^{\ast}:\pi_{\mathbb{Q}}(C^{\prime}, v^{\prime})\to \pi_{\mathbb{Q}}(C, v).
$$
\end{theorem}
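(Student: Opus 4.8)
The plan is to define $f^{\ast}$ on homotopy classes by $f^{\ast}([p']) = [f(p')]$ for $p' \in C'(v')$, and then to check, in order, that it is well defined, that it is a $\mathbb{Q}$-homomorphism, and finally that it is injective; this last point is the real content of the theorem.

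Well-definedness and the homomorphism property both go downward, from $C'$ to $C$, and use only axioms 2--4 of the covering map. Applying axiom 3 to the trivial cycle $e'(e')^{-1}$ shows $f((e')^{-1}) = (f(e'))^{-1}$, so a type-1 cancelation in $C'$ is carried by $f$ to a type-1 cancelation (or to the trivial one) in $C$; and axiom 4, in the form $f((r')^{(\alpha)}) = (f(r'))^{(\alpha)}$, carries a type-2 replacement of $(r')^{(\alpha)}(r')^{(\beta)}$ by $(r')^{(\alpha+\beta)}$ in $C'$ to the corresponding replacement in $C$. Hence $p_1' \simeq p_2'$ implies $f(p_1') \simeq f(p_2')$, so $f^{\ast}$ is well defined, and then $f^{\ast}([p'][q']) = [f(p')f(q')] = f^{\ast}([p'])f^{\ast}([q'])$ by axiom 3, with the analogous identity for $(p')^{(\alpha)}$ by axiom 4.

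For injectivity I would run the classical path-lifting argument upward, from $C$ to $C'$; this is where local injectivity (axiom 5) is used. First I would establish unique edge- and path-lifting: given a vertex $w'$ with $f(w') = v$ and an edge $e$ of $C$ issuing from $v$, bijectivity of $f$ on the star of $w'$ yields a unique edge $e'$ at $w'$ with $f(e') = e$, and iterating lifts any edge-path of $C$ based at $v$ to a unique edge-path of $C'$ based at $w'$. Now suppose $f^{\ast}([p']) = 1$, that is $f(p') \simeq 1$, and fix a null-homotopy $f(p') = q_0 \simeq q_1 \simeq \cdots \simeq q_n = 1$ realized by single cancelations. Lifting each $q_i$ to the path $q_i'$ based at $v'$, I want to show that every $q_i'$ is in fact a cycle at $v'$ and that $q_i' \simeq q_{i+1}'$ in $C'$; since $q_0' = p'$ and $q_n' \simeq 1$, this gives $p' \simeq 1$ and hence injectivity.

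The main obstacle is lifting the cancelations one step at a time, and here the two types behave very differently. A type-1 cancelation $\ldots e e^{-1} \ldots \rightarrow \ldots$ lifts harmlessly: by uniqueness of lifting the edge following the lift $e'$ must be $(e')^{-1}$, so $q_i'$ contains $e'(e')^{-1}$, and deleting it gives exactly $q_{i+1}'$ with matching endpoints. The genuine difficulty is the type-2 cancelation $\ldots r^{(\alpha)} r^{(\beta)} \ldots \rightarrow \ldots r^{(\alpha+\beta)} \ldots$: to reproduce it as a type-2 cancelation in $C'$ I need the lift of the cycle $r$ based at the relevant intermediate vertex $w'$ to be again a genuine cycle $r'$ at $w'$, for only then are $(r')^{(\alpha)}$, $(r')^{(\beta)}$, $(r')^{(\alpha+\beta)}$ defined and, by axiom 4 together with uniqueness of lifting, equal to the lifts of $r^{(\alpha)}$, $r^{(\beta)}$, $r^{(\alpha+\beta)}$. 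Proving that these lifts close up --- so that the $\mathbb{Q}^+$-action on $C(v)$ lifts compatibly to the action on $C'(w')$ --- is the crux, and I expect to obtain it from axiom 4, the uniqueness of path lifting, and the height and unique-factorization axioms of the rational complex $C'$, which together should force the lifted reparametrization to remain a loop. Once this closure is secured, both the relation $q_i' \simeq q_{i+1}'$ and the cycle property propagate along the homotopy, and injectivity follows.
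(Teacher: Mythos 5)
Your definition of $f^{\ast}$, the check that it is well defined, and the homomorphism property coincide with the paper's proof, and your treatment of type-1 cancelations in the injectivity step (local injectivity forces the lifted edges to cancel) is also the paper's argument. The problem is the type-2 step: you have located the crux correctly but not closed it. The sentence ``I expect to obtain it from axiom 4, the uniqueness of path lifting, and the height and unique-factorization axioms'' is where the proof stops being a proof, and the mechanism you hope for cannot work. In a covering, the lift of a cycle of $C$ based at a prescribed vertex of $C^{\prime}$ does \emph{not} in general close up --- that is precisely why $f^{\ast}(\pi_{\mathbb{Q}}(C^{\prime}, v^{\prime}))$ is in general a \emph{proper} subgroup of $\pi_{\mathbb{Q}}(C, v)$, and it is exactly the phenomenon exploited in the proof of Theorem 5.3, where a cycle $u$ of $C$ lifts to a closed cycle at the vertex $H[p]$ only when $[p][u][p]^{-1}\in H$. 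No amount of height or unique-factorization bookkeeping inside $C^{\prime}$ can force closure, because closure fails for structural, not combinatorial, reasons.

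What your argument never invokes is the hypothesis of Definition 5.1 that carries the whole weight in the paper: $f$ is \emph{surjective} as a map from $\Sigma$ (vertices, edges \emph{and cycles} of $C^{\prime}$) onto $C(v)$. This is how the paper handles a part $u^{(\alpha)}u^{(\beta)}$ of $f(e^{\prime}_1)\ldots f(e^{\prime}_n)$: surjectivity provides some cycle $q$ of $C^{\prime}$ with $f(q)=u$, so that $q^{(\alpha)}$ and $q^{(\beta)}$ are \emph{already defined} (no closing-up of a lift is needed); axiom 4 gives $f(q^{(\alpha)})=u^{(\alpha)}$ and $f(q^{(\beta)})=u^{(\beta)}$; and local injectivity then identifies $q^{(\alpha)}$ and $q^{(\beta)}$ edge by edge with the segments $e^{\prime}_i\ldots e^{\prime}_j$ and $e^{\prime}_{j+1}\ldots e^{\prime}_r$ of $p^{\prime}$, so that the relevant part of $p^{\prime}$ literally equals $q^{(\alpha)}q^{(\beta)}$ and the type-2 move can be performed upstairs. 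In other words, instead of proving that the lift of $u$ closes up at the relevant vertex (false in general), one imports a closed preimage of $u$ from elsewhere in $C^{\prime}$ and shows it coincides with the lifted segments. Since your proposal uses only axioms 2--5, it would apply verbatim to a non-surjective locally injective map, and for such maps the conclusion can genuinely fail: a cycle of $C^{\prime}$ mapping to $u^{(1/2)}u^{(1/2)}u^{-1}\simeq 1$ has no reason to be null-homotopic upstairs if no cycle of $C^{\prime}$ maps to $u$. So the gap is not a technicality; the missing idea is the use of surjectivity.
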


\begin{proof}
Define $f^{\ast}$ by
$$
f^{\ast}([e^{\prime}_1e^{\prime}_2\ldots e^{\prime}_n])=[f(e^{\prime}_1)f(e^{\prime}_2)\ldots f(e^{\prime}_n)].
$$
We show that $f^{\ast}$ is well-defined. We verify that any cancelation in $p^{\prime}=e^{\prime}_1e^{\prime}_2\ldots e^{\prime}_n$ corresponds
to a similar cancelation in $p=f(e^{\prime}_1)f(e^{\prime}_2)\ldots f(e^{\prime}_n)$. If $e^{\prime}e^{\prime -1}$ appears in $p^{\prime}$,
then $f(e^{\prime})f(e^{\prime})^{-1}$ appears in $p$. If $u^{(\alpha)}u^{(\beta)}$ is a part of $p^{\prime}$, then
$$
f(u^{(\alpha)})f(u^{(\beta)})=(f(u))^{(\alpha)}(f(u))^{(\beta)}
$$
is a part of $p$ and so $f^{\ast}$ is well-defined. Clearly, $f^{\ast}$ is a homomorphism. To show that it is injective, suppose
$$
f^{\ast}([e^{\prime}_1e^{\prime}_2\ldots e^{\prime}_n])=1.
$$
In other words, we must show that $f(e^{\prime}_1)f(e^{\prime}_2)\ldots f(e^{\prime}_n)\simeq 1$ implies $e^{\prime}_1e^{\prime}_2\ldots e^{\prime}_n\simeq 1$.

1- Let $f(e^{\prime -1}_i)=f(e^{\prime}_{i+1})$. Since $e^{\prime}_i$ and $e^{\prime}_{i+1}$ have a common terminal point, and $f$ is
locally injective,  $e^{\prime -1}_i=e^{\prime}_{i+1}$.

2- Suppose
$$
u^{(\alpha)}u^{(\beta)}=(f(e^{\prime}_i)\ldots f(e^{\prime}_j))(f(e^{\prime}_{j+1})\ldots f(e^{\prime}_r))
$$
is a part of $f(e^{\prime}_1)f(e^{\prime}_2)\ldots f(e^{\prime}_n)$. We know that $f$ is surjective, so there is  $q\in C^{\prime}(v^{\prime})$,
such that $f(q)=u$. So,
$$
f(q^{(\alpha)})=u^{(\alpha)}=f(e^{\prime}_i\ldots e^{\prime}_j),
$$
and since $f$ is locally injective, we have $q^{(\alpha)}=e^{\prime}_i\ldots e^{\prime}_j$ and a similar argument shows that
$q^{(\beta)}=e^{\prime}_{j+1}\ldots e^{\prime}_r$. So, we have $e^{\prime}_1e^{\prime}_2\ldots e^{\prime}_n\simeq 1$.
\end{proof}

We are now going to prove our main theorem.

\begin{theorem}
The variety of rational exponential groups is a Schreier variety.
\end{theorem}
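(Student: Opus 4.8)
The plan is to imitate the topological proof of the Nielsen--Schreier theorem, with the covering complexes of Definition 5.1 playing the role of covering spaces. Let $F=F_{\mathbb{Q}}(X)$ be a free rational exponential group and let $H$ be a $\mathbb{Q}$-subgroup; the goal is to show that $H$ is again free in the variety. First I would realize $F$ as a fundamental group of a rational bouquet: taking $K=\{1\}$ in the construction of Section 4, Theorem 4.8 gives a rational bouquet $(C,v)$ (with trivial color set) such that $\pi_{\mathbb{Q}}(C,v)\cong F$, and I recall that the edges of $C$ are the symbols $e(w)$, one for each pure radical $w\in B$, with $[\partial(w)]$ corresponding to the group element $w$.

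Next I would build the covering complex $C'$ attached to $H$ as a Schreier coset graph. Its vertices are the right cosets $Hg$ with $g\in F$, the base vertex being $v'=H$; for every vertex $Hg$ and every pure radical $w\in B$ I attach an edge $Hg\to Hg w$ lying over $e(w)$, its inverse lying over $e(w^{-1})=e(w)^{-1}$. The projection $f\colon C'\to C$ sending every vertex to $v$ and each such edge to $e(w)$ is then surjective, locally injective, and multiplicative on paths. The point is to upgrade $C'$ to a genuine rational complex so that Definition 5.1 applies: I define the action of $\mathbb{Q}^{+}$ on a cycle $p'$ based at a vertex $v''$ to be the unique lift, starting at $v''$, of $(f(p'))^{(\alpha)}$, and I set $h_{v''}(p')=h_{v}(f(p'))$. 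The essential tool is a \emph{unique lifting lemma}: once a starting vertex is fixed, every path of $C$, and every cancelation and every $\mathbb{Q}^{+}$-translate of it, lifts uniquely, because $f$ is locally injective and surjective. Granting this, all items of Definition 2.2 and Definition 2.3 for $C'$ follow by transporting the corresponding statements for $C$ through $f$, and $f$ satisfies the five axioms of Definition 5.1, so $f$ is a covering map.

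The step requiring care --- the main obstacle --- is precisely that the lifted $\mathbb{Q}^{+}$-action lands back inside $C'(v'')$, i.e. that $p'^{(\alpha)}$ is again a cycle. If $p'$ is a cycle at $Hg$, then $f(p')$ represents an element of $g^{-1}Hg$, and the lift of $(f(p'))^{(\alpha)}$ from $Hg$ terminates at the coset obtained by right-multiplying $Hg$ by the $\alpha$-th power of that element; this returns to $Hg$ exactly because $g^{-1}Hg$, being a conjugate of the $\mathbb{Q}$-subgroup $H$, is closed under taking $\alpha$-th powers. Thus the hypothesis that $H$ is a $\mathbb{Q}$-subgroup is exactly what makes $C'$ a rational complex, and the remaining parts of Definition 2.3 --- the unique radical decomposition of item~4 and the behaviour of atomic cycles in items~5 and~6 --- are inherited vertex-by-vertex from $C$ through the local isomorphism $f$.

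Finally I would read off the conclusion. By Theorem 5.2 the covering map induces an embedding $f^{\ast}\colon\pi_{\mathbb{Q}}(C',v')\hookrightarrow\pi_{\mathbb{Q}}(C,v)=F$, and since $f$ commutes with the $\mathbb{Q}^{+}$-action this is an embedding of rational groups. Its image is $\{[f(p')]:p'\in C'(v')\}=\{[p]\in F: H[p]=H\}=H$, so $\pi_{\mathbb{Q}}(C',v')\cong H$ as rational exponential groups. But $C'$ is a rational complex, so Theorem 2.6 lets me retract it along a maximal tree $T$ to a rational bouquet $C'_{T}$ with $\pi_{\mathbb{Q}}(C',v')\cong\pi_{\mathbb{Q}}(C'_{T},v')$, and Corollary 2.9 shows the latter is free in the variety of rational exponential groups. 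Hence $H$ is free, and the variety is a Schreier variety.
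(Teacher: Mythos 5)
Your proposal is correct and follows essentially the same route as the paper: you build the Schreier coset covering complex with vertices $H[p]$, define the $\mathbb{Q}^{+}$-action by lifting and justify its well-definedness exactly as the paper does (closure of the conjugate of the $\mathbb{Q}$-subgroup $H$ under rational powers), then combine the embedding of Theorem 5.2 with the freeness results of Section 2 (Theorem 2.6 and Corollary 2.9) to conclude $H$ is free. The only cosmetic difference is that you anchor the ambient bouquet in the explicit Section 4 construction with $K=\{1\}$, whereas the paper works with an arbitrary rational bouquet; the substance is identical.
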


\begin{proof}
Since any free element of the variety of rational exponential groups has the form $\pi_{\mathbb{Q}}(C, v)$ for some rational bouquet $(C,v)$,
we prove equivalently the following statement:\\

{\em Let $(C, v)$ be a rational bouquet and $H$ be a rational subgroup (i.e. a $\mathbb{Q}$-subgroup) of $\pi_{\mathbb{Q}}(C, v)$.
Then there exists a rational complex $C^{\prime}$, a covering map $f:C^{\prime}\to C$ and a vertex $v^{\prime}\in C^{\prime}$, such that
$$
f^{\ast}(\pi_{\mathbb{Q}}(C^{\prime}, v^{\prime}))=H.
$$
}

This will prove our theorem, since we know that $\pi_{\mathbb{Q}}(C^{\prime}, v^{\prime})$ is free and $f^{\ast}$ is a  monomorphism.
We introduce $C^{\prime}$ as follows.\\

1- The vertices are right cosets $H[p]$, with $[p]\in  \pi_{\mathbb{Q}}(C, v)$.

2- Edges are  pairs $(e, H[p])$, where $e$ is an edge of $C$ and $[p]\in \pi_{\mathbb{Q}}(C, v)$.

3- We define $(e, H[p])^{-1}=(e^{-1}, H[pe])$.

4- We have $in(e, H[p])=H[p]$ and $end(e, H[p])=H[pe]$.

5- Suppose $P=(e_1, H[p_1])(e_2, H[p_2])\ldots (e_n, H[p_n])$ is a cycle with the terminal point $H[p_1]$ and $\alpha\in \mathbb{Q}^+$. Let
$$
(e_1e_2\ldots e_n)^{(\alpha)}=f_1f_2\ldots f_m.
$$
Then, we define
$$
P^{(\alpha)}=(f_1, H[p_1])(f_2, H[p_1f_1])\ldots (f_m, H[p_1f_1\ldots f_{m-1}]).
$$
In the sequel we will show that this action is well-defined.

6- The height function is defined by
$$
h_{H[p]}((e_1, H[p_1])(e_2, H[p_2])\ldots (e_n, H[p_n]))=h_v(e_1e_2\ldots e_n).
$$

First, we show that the action $P^{(\alpha)}$ is well-defined. Since $P$ is a cycle, so for any $i$, we have
$$
end(e_i, H[p_i])=in(e_{i+1}, H[p_{i+1}]),
$$
hence $H[p_ie_i]=H[p_{i+1}]$. This shows that
$$
[p_1][e_1e_2\ldots e_n][p_1]^{-1}\in H,
$$
and since $H$ is a rational subgroup, we obtain
$$
[p_1][f_1f_2\ldots f_m][p_1]^{-1}=[p_1][e_1e_2\ldots e_n]^{\alpha}[p_1]^{-1}\in H.
$$
So, the action is well-defined. To make working with cycles in $C^{\prime}$ easier, we introduce a notation: if
$$
P=(e_1, H[p_1])(e_2, H[p_2])\ldots (e_n, H[p_n])
$$
is a cycle, with the end point $H[p_1]=H[p]$, then we can re-write it in a new form
$$
P=(e_1, H[p])(e_2, H[pe_1])\ldots (e_n, H[pe_1\ldots e_{n-1}]).
$$
Further, we have $[p][e_1e_2\ldots e_n][p]^{-1}\in H$. So, we may use the notation
$$
P=\langle e_1e_2\ldots e_n, H[p]\rangle.
$$
With this new notation, we have
$$
P^{(\alpha)}=\langle (e_1e_2\ldots e_n)^{(\alpha)}, H[p]\rangle.
$$
This notation has some other advantages: if
$$
P=\langle e_1e_2\ldots e_n, H[p]\rangle, \ \ and\ \ Q=\langle f_1f_2\ldots f_m, H[p]\rangle,
$$
then it is easy to see that
$$
PQ=\langle e_1\ldots e_nf_1\ldots f_m, H[p]\rangle.
$$
Now, for example, we have
\begin{eqnarray*}
P^{(\alpha)}P^{(\beta)}&=&\langle (e_1\ldots e_n)^{(\alpha)}(e_1\ldots e_n)^{(\beta)}, H[p]\rangle\\
                       &\simeq&\langle (e_1\ldots e_n)^{(\alpha+\beta)}, H[p]\rangle.
\end{eqnarray*}
Let $P_1=\langle p_1, H[p]\rangle$ and $P_2=\langle p_2, H[p]\rangle$ and suppose $P_1^m\simeq P_2^m$. Then we have also $p_1^m\simeq p_2^m$,
and hence $p_1\simeq p_2$. This shows that $P_1\simeq P_2$. We can also check the requirements of height function. For example, if
$$
\langle e_1e_2\ldots e_n, H[p]\rangle\simeq \langle f_1f_2\ldots f_m, H[p]\rangle,
$$
then $e_1e_2\ldots e_n\simeq f_1f_2\ldots f_m$ and so $h(e_1e_2\ldots e_n)=h(f_1f_2\ldots f_m)$. This shows that $h_{H[p]}$ is
constant on homotopy classes of $C^{\prime}$. Other conditions for height function can be verified similarly. Therefore, $C^{\prime}$ is a
rational complex. Now, define a map $f:C^{\prime}\to C$ by $f(H[p])=v$ and $ f(\langle e_1e_2\ldots e_n, H[p]\rangle)=e_1e_2\ldots e_n$.
Clearly $f$ is surjective and it is locally injective. Properties of covering map can be checked for $f$. Suppose $v^{\prime}=H$.
We prove that $f^{\ast}(\pi_{\mathbb{Q}}(C^{\prime}, v^{\prime}))=H$. Note that
$$
C^{\prime}(v^{\prime})=\{ \langle p, H\rangle : p\in C(v), [p]\in H\}.
$$
Hence
$$
\pi_{\mathbb{Q}}(C^{\prime}, v^{\prime})=\{ [\langle p, H\rangle]:\ [p]\in H\},
$$
and therefore
$$
f^{\ast}(\pi_{\mathbb{Q}}(C^{\prime}, v^{\prime}))=\{ [p]:\ [p]\in H\}=H.
$$
This completes the proof.
\end{proof}

\end{document}